\PassOptionsToPackage{table}{xcolor}
\documentclass[journal, twocolumn]{IEEEtran}

\usepackage{amsmath,amsfonts}
\usepackage{array}
\usepackage{textcomp}
\usepackage{stfloats}
\usepackage{url}
\usepackage{graphicx}
\usepackage{subfiles}
\usepackage{subcaption}
\usepackage{booktabs}
\usepackage{multirow}
\usepackage{makecell}
\usepackage{algorithm}
\usepackage{algpseudocode}
\usepackage{enumitem}
\usepackage{tcolorbox}
\usepackage{cite}
\usepackage{amssymb}
\usepackage{xcolor}
\usepackage[hidelinks]{hyperref}
\usepackage{tabularx}
\usepackage{amsthm}
\usepackage{pifont}
\usepackage{balance}
\usepackage{tikz}
\usepackage{siunitx}
\usepackage{titlesec}
\titlespacing*{\section}{0pt}{5pt}{5pt} 
\titlespacing*{\subsection}{0pt}{5pt}{5pt} 

\extrafloats{256}

\theoremstyle{plain}
\newtheorem{theorem}{Theorem}[section]
\newtheorem{lemma}[theorem]{Lemma}
\newtheorem{proposition}[theorem]{Proposition}

\theoremstyle{definition}
\newtheorem{definition}[theorem]{Definition}

\theoremstyle{remark}
\newtheorem{remark}[theorem]{Remark}

\newcolumntype{Y}{>{\centering\arraybackslash}X}
\newcommand{\etal}{\textit{et al.}}

\makeatletter
\g@addto@macro\normalsize{%
  \setlength\abovedisplayskip{4pt plus 2pt minus 2pt}%
  \setlength\belowdisplayskip{4pt plus 2pt minus 2pt}%
  \setlength\abovedisplayshortskip{0pt plus 2pt}%
  \setlength\belowdisplayshortskip{4pt plus 2pt minus 2pt}%
}
\makeatother

\newcommand{\lex}{\mathrm{lex}}

\newcommand{\cl}{\mathrm{cl}}
\DeclareMathOperator*{\argmin}{arg\,min}

\DeclareMathOperator{\relint}{relint}

\usetikzlibrary{shapes.geometric, arrows.meta, positioning, fit, backgrounds, calc, decorations.pathreplacing, shapes.multipart}

\begin{document}

\title{Weight Certificates for Convex Multi-Objective MPC:\\ Geometric Characterization, $\ell^1$ Construction, and $\ell^2$ Foreclosure}

\author{Hadi Hajieghrary, Benedikt Walter, Chaitanya Shinde, Miguel Hurtado, and Jerry Lopez
\thanks{The authors are with TORC Robotics LLC, an independent subsidiary of Daimler Truck AG. Correspondence: {\tt\small Hadi.Hajieghrary@Torc.ai}.}%
\thanks{\emph{Research-prototype disclaimer:} This paper describes a research prototype evaluated exclusively in closed-loop computer simulation (\texttt{nuPlan-mini}). It does not describe or characterize any Torc Robotics production autonomous-driving system or any system deployed on public roads. The term ``safety'' labels optimization-priority tiers and does not constitute a collision-avoidance guarantee or a claim of compliance with any legal or regulatory standard. All findings are scoped to the simulation benchmark conditions described herein.}}

\markboth{IEEE Transactions on Intelligent Vehicles,~Vol.~XX, No.~X, Month~Year}%
{Hajieghrary \MakeLowercase{\textit{et al.}}: Weight Certificates for Convex Priority-Ordered MPC}

\maketitle

\begin{abstract}
Automated-driving rulebooks rank rule violations lexicographically, and model predictive control enforces that ranking either exactly, through $L{+}1$ sequential programs per tick, or approximately, through a weighted sum tuned by the separation heuristic $w_1\gg w_2\gg\cdots\gg w_L$. We show the heuristic answers the wrong question. For a convex priority-ordered program, a weighted sum reproduces the lexicographic optimum precisely when its weight, augmented by a unit performance coefficient, supports the upper image of the achievement map at the lexicographic point; the admissible weights form the unit-performance slice of an outward normal cone. Under hinge penalties this slice is a polyhedron obtained by projecting a scaled-KKT system, and a linear program returns an interior weight with a certified margin; under squared-hinge penalties no finite weight is exact whenever the limiting multiplier is nonzero, with violation along the local minimizer branch decaying as $O(1/w)$. Calibrated on held-out logs, the resulting weights have near-equal tier components in nine of eleven calibration-eligible scenario classes and roughly double legal-tier event precision against a matched heuristic weight in closed-loop nuPlan experiments on a 25-rule rulebook. The certificate is, however, pointwise: no single weight is valid across the sampled ticks of an episode, the median lifetime is one sampling interval (zero subsequent ticks at the native rate), and persistence tracks active-set stability. These findings motivate monitored weighted solves with selective cascade fallback, although the compliance-pattern monitor detects only a subset of measured lapses.
\end{abstract}

\begin{IEEEkeywords}
Model predictive control, lexicographic optimization, multi-objective
optimization, weighted-sum scalarisation, convex optimization, rulebooks,
autonomous vehicles, nuPlan simulator.
\end{IEEEkeywords}

\section{Introduction}
\label{sec:introduction}
Trajectory planning for automated vehicles is governed by constraints partitioned into priority tiers. Within this planning hierarchy, safety rules, including collision avoidance and dynamic stability, are treated as the highest-priority objectives. Legal rules, such as speed limits and lane discipline, occupy the next tier, followed by comfort rules, including bounded acceleration and smooth steering.
The \emph{rulebook} formalism of Censi~\etal~\cite{censi2019} formalizes this prioritization: a planner minimizes violations of higher-priority rules before considering lower-priority ones, and resolves ties based on performance. This work investigates the model-predictive-control (MPC) implementation of this framework and characterizes the conditions under which a single optimization can replace an exact cascade.

We aggregate rule violations for a candidate trajectory $z$ into $L$ priority levels via functionals $V_i(z)=\sum_j [g_{i,j}(z)]_+^{p}$, with $p$ in $\{1,2\}$, and denote the performance objective by $J(z)$. The lexicographic cascade implements the priority semantics by minimizing $V_i$ at each stage over the optimal set, followed by $J$. This approach requires $L+1$ sequential optimizations per decision instance, which is computationally prohibitive at $10$ to $100$ Hz. Weighted-sum scalarisation instead reduces the hierarchy to a single optimization.
\begin{align}\label{eq:ws-intro}
 \min_{z}\; &J(z)+\sum_{i=1}^{L} w_i\,V_i(z),
 \qquad w\in\mathbb{R}^{L}_{++},
\end{align}
offering an idealized factor-$(L{+}1)$ reduction in the number of solves per tick.

Conventional implementations of (1) employ the large-ratio separation heuristic $w_1 \gg w_2 \gg \cdots \gg w_L$. The theory requires only strict positivity. Whether a given $w$ recovers the lexicographic optimum is a geometric membership question, decidable by a single linear feasibility program. Empirical evaluation on held-out driving logs shows that in nine of eleven calibration-eligible classes, the calibrated weights in Table 1 have near-equal tier components, with ratios $1:1:1$ and approximately $2:1:1$, in contrast to the deployed heuristic's $100:10:1$. Where the audit identifies regions that constrain the weights, the comfort-tier lower bound is approximately $89$, while the heuristic supplies $10$. The heuristic fails in the direction that breaks equivalence: priority is determined by membership, not magnitude.

We address the geometric structure of the set of weights for which weighted-sum scalarisation recovers the lexicographic optimum of a convex priority-ordered program, the use of this structure for deployable calibration, and the persistence of a calibrated weight under receding-horizon operation.

\emph{\textbf{1. Weight-certificate theory.}} Equivalent weights are characterized as a normal-cone slice: a weight reproduces the lexicographic optimum of a convex priority-ordered program \emph{iff} $(w,1)$ supports the upper image of the achievement map at $p^\star$ (Theorem~\ref{thm:main}). Under $\ell_1$ hinges the slice is polyhedral with an explicit scaled-KKT half-space construction in $\beta_{i,j}=w_i\alpha_{i,j}$ and a Chebyshev-centre weight with certified margin, verified against direct optimization on $1000$ randomized instances (Theorems~\ref{thm:poly} and~\ref{thm:l1comp}, Proposition~\ref{prop:convexJ}, Algorithm~\ref{alg:calib}); under $\ell_2$ squared hinges finite-weight exactness is foreclosed whenever the limiting multiplier is nonzero, with rates $\Theta(1/w_i)$ and $\Theta(1/w_i^{2})$ (Lemma~\ref{lem:vanish}, Theorem~\ref{thm:l2asymp}, Proposition~\ref{prop:l2tol}, Appendix~\ref{sec:appendix}).

\emph{\textbf{2. Certificate persistence.}} Multi-instance persistence is formalized through the robust intersection of pointwise regions (Proposition~\ref{prop:robust}) and measured on held-out receding-horizon problems: in the evaluated data, the intersections are empty in every scenario class, native-rate lifetime is typically shorter than one $0.1$\,s tick, and validity tracks active-set turnover (Section~\ref{sec:experiments:certificate}).

\emph{\textbf{3. Calibration and monitored deployment.}} Held-out geometry-calibrated weights (near-equal tier components in nine of eleven calibration-eligible classes) and a guarded weighted-solve architecture are evaluated against matched heuristic and cascade references, quantifying event-level fidelity, monitor accuracy, fallback demand, and computational cost on the eleven calibration-eligible classes (Section~\ref{sec:experiments:calibrated}); the reference cascade's priority semantics are validated on a fixed nuPlan benchmark ($12$ of $16$ scenarios, $p=0.0384$), with the closed-loop $\ell_2$ ablation moving in the direction the dichotomy predicts (Section~\ref{sec:experiments}).

The closest related deployment, W-SQP~\cite{hajieghrary2026wsqp}, originates from the same research group; the relationship is disclosed explicitly, and artifact-level overlap is enumerated in a statement released with the reproducibility artifact. W-SQP compiles nine driving-rule families into a four-tier shared-slack NMPC with heuristically separated quadratic tier penalties and names an $\ell_1$ lexicographic guarantee as an open refinement. The present paper supplies that refinement: it poses the equivalence question W-SQP left open, answers it with the support-normal characterization and the $\ell_1$ half-space construction, and shows that W-SQP's quadratic-penalty regime is the $\ell_2$ setting whose finite-weight exactness the dichotomy theorem rules out at binding constraints.

\noindent\emph{Note on terminology:} Throughout this paper, the terms ``safety,'' ``safety rule,'' and ``safety level'' refer to constraint tiers that receive the highest optimization priority within the LCP-MPC framework. This terminology reflects the structure of the planning objective and does not constitute a representation that the research prototype is collision-free, that it satisfies any legal or regulatory safety standard, or that any deployment relying on this approach is safe for public-road operation. All empirical results are derived from closed-loop computer simulation and are scoped accordingly.

\subsection{Related Work}
\label{sec:related_work}

In convex scalarisation, a strictly positive weight is an inner normal of a supporting hyperplane at the exposed image point, and the supporting functionals there form the outward normal cone~\cite{boyd2004convex}. The operations-research strand poses our question directly: goal programming; Sherali's existence result, full characterization of the weight set, and two constructions, for lexicographic \emph{linear} programs~\cite{sherali1982equivalent}; explicit weight bounds for combinatorial instances~\cite{eisenbrand2003}; counterexamples admitting no finite weighting~\cite{sheralisoyster1983}; and a first-order characterization through stationarity~\cite{rentmeesters1996}. We differ on three counts: the setting is \emph{convex}, not linear; the admissible set is exhibited \emph{geometrically}, as the unit-performance slice of the upper image's outward normal cone, which makes its dimension, boundaries, and inscribed-ball margin computable objects rather than by-products of a construction; and where~\cite{rentmeesters1996} gives first-order conditions for the lexicographic problem, our scaled-KKT projection characterizes the \emph{weight set} reproducing its solution. The $\ell_2$ foreclosure has no ancestor here --- a Sherali--Soyster-type impossibility tied to the penalty, not to a pathological instance --- while Sherali's existence is recovered as non-emptiness of the slice.

For exact hierarchy at solver cost, the literature suggests embedding of the priority order in the receding-horizon loop through cascades of single-objective programs~\cite{longgatzke2007, ocampo2008,padhiyar2009,zheng2010lexnmpc,zavala2012utopia}, online relaxation programs~\cite{vada2001jota,vada2001auto}, or hierarchical QP cascades from robotics~\cite{kanoun2011,escande2014,mifsud2021}. All are exact, all pay $L{+}1$ solves or a relaxation, and none state when one solve suffices. The mechanism is exact-penalty theory: a hinge is exact at \emph{finite} weight above the dual norm of the optimal multiplier~\cite{nocedal2006,bertsekas2016}, whereas a squared hinge has zero boundary derivative and, at a nonzero limiting multiplier, is exact at no finite weight. This threshold is classical for one soft constraint in MPC~\cite{kerrigan2000soft}; lifting it to $L$ levels replaces the scalar threshold by a full-dimensional polyhedral \emph{region}. That region is not a critical region of explicit mpMPC~\cite{bemporad2002explicit}: both partition by active set, but explicit MPC partitions the \emph{state} space at a fixed objective, ours the \emph{weight} space at a fixed decision instance.

Rulebooks~\cite{censi2019} descend from minimum-violation planning~\cite{tumova2013,esterle2020}, and since gradient methods cannot optimize a lexicographic order directly, the field split along the smooth/non-smooth axis formalized here: combinatorial ranking~\cite{veer2023}, barrier encodings~\cite{xiao2021}, and continuous lexicographic surrogates~\cite{rasekhipour2018} --- the last being exactly the regime our $\ell_2$ analysis forecloses. Responsibility-Sensitive Safety~\cite{shalev2017rss,hasuo2022rss,ieee2846} is complementary (Section~\ref{sec:discussion:rss}).

\section{Problem Formulation}
\label{sec:problem_formulation}
We consider the exact equivalence between lexicographic and weighted-sum formulations in convex priority-ordered programs. Specifically, we characterize the scalar weights for which a single weighted-sum solve recovers the lexicographic constraint order. We define the priority-ordered control problem, its lexicographic cascade, and the weighted-sum surrogate, and introduce the upper-image geometry.
All spaces are finite-dimensional and Euclidean.

\subsection{Notation and Problem Formulation}
\label{sec:problem:notation}

The non-negative orthant of $\mathbb{R}^L$ is $\mathbb{R}_{\ge0}^L$ and its interior is $\mathbb{R}_{++}^L$. The positive part is $[a]_+:=\max(0,a)$ and the relative interior of a set $S$ is $\relint(S)$. For a closed convex set $\mathcal{S}\subset\mathbb{R}^N$ and a point $p\in\mathcal{S}$, the \emph{normal cone} is
\begin{equation}\label{eq:normalcone}
  N_{\mathcal{S}}(p):=\bigl\{\eta\in\mathbb{R}^N:\langle\eta,q-p\rangle\le0,\;\forall q\in\mathcal{S}\bigr\},
\end{equation}
and the \emph{outward normal cone} is $N_{\mathcal{S}}^+(p):=-N_{\mathcal{S}}(p)$.

We consider the discrete-time system $x_{k+1}=f(x_k,u_k)$ over horizon $N$, with $x_k\in\mathbb{R}^n$ and $u_k\in\mathbb{R}^m$, and collect the trajectory $z:=(x_{0:N},u_{0:N-1})\in\mathbb{R}^d$ where $d:=n(N{+}1)+mN$. The performance objective is the Bolza cost
\begin{equation}\label{eq:perfobj}
  J(z):=\sum_{k=0}^{N-1}\ell(x_k,u_k)+\ell_f(x_N),
\end{equation}
with stage cost $\ell$ and terminal cost $\ell_f$. The \emph{dynamics-feasible set} is
\[
  \mathcal{Z}:=\bigl\{z\in\mathbb{R}^d:x_{k+1}=f(x_k,u_k),\;x_0=\bar{x}_0,\;u_k\in\mathcal{U}_{\mathrm{hard}}\bigr\},
\]
where $\mathcal{U}_{\mathrm{hard}}$ encodes physical actuator limits. The rule-based constraints are partitioned into $L$ priority levels
\begin{equation}\label{eq:levels}
  \mathcal{C}_i:=\bigl\{z:g_{i,j}(z)\le0,\;j=1,\ldots,m_i\bigr\},\quad i=1,\ldots,L,
\end{equation}
with priority ordering $\mathcal{C}_1\succ\mathcal{C}_2\succ\cdots\succ\mathcal{C}_L$. The
\emph{violation functional} at level~$i$ is
\begin{equation}\label{eq:viol}
  V_i(z):=\sum_{j=1}^{m_i}\bigl[g_{i,j}(z)\bigr]_+^p,\qquad p\in\{1,2\},
\end{equation}
The parameter $p$ selects the penalty form. The case $p=1$ is the $\ell_1$ hinge penalty; $p=2$ is the $\ell_2$ squared-hinge penalty. This distinction is structural and enters all subsequent analysis.

\subsection{Cascade, Weighted Sum, and Standing Assumptions}
\label{sec:problem:cascade}

The violation vector induces the lexicographic preference relation
$(V_1(z),\ldots,V_L(z))$.

\begin{definition}[Lexicographic preference]\label{def:lexpref}
  Trajectory $z$ is lexicographically preferred to $z'$, written $z\prec_{\lex}z'$, if
  there exists $k\in\{1,\ldots,L\}$ such that $V_i(z)=V_i(z')$ for all $i<k$ and
  $V_k(z)<V_k(z')$.
\end{definition}

The \emph{lex cascade} is the sequential realization of this preference. We set $\mathcal{F}_0:=\mathcal{Z}$, and for $i=1,\ldots,L$, \begin{align}\label{eq:cascade}
  V_i^\star&:=\min_{z\in\mathcal{F}_{i-1}}V_i(z),\\
  \mathcal{F}_i&:=\bigl\{z\in\mathcal{F}_{i-1}:V_i(z)=V_i^\star\bigr\}.\nonumber
\end{align}
The lex optimum is the performance minimizer on the final slice,
\begin{equation}\label{eq:lexopt}
z_{\lex}^\star\in\argmin_{z\in\mathcal{F}_L}J(z).
\end{equation}
For a weight vector $w\in\mathbb{R}_{++}^L$, the \emph{weighted-sum} (WS) problem is
\begin{equation}\label{eq:ws}
z_{\mathrm{ws}}^\star(w)\in\argmin_{z\in\mathcal{Z}}\;J(z)+\sum_{i=1}^{L}w_i V_i(z).
\end{equation}
The cascade requires $L+1$ sequential programs per MPC cycle. We characterize the conditions under which the single weighted-sum solve~\eqref{eq:ws} yields the same solution as the cascade, that is, when
$z_{\mathrm{ws}}^\star(w)=z_{\lex}^\star$.

We adopt the following assumptions throughout.

\noindent\textbf{(A1) Existence.} \textit{The set $\mathcal{Z}$ is compact; the minima in\,\eqref{eq:cascade} are attained, the argmin in\,\eqref{eq:lexopt} is non-empty, and the minimiser in\,\eqref{eq:ws} exists for every $w\in\mathbb{R}_{++}^L$.}

\noindent\textbf{(A2) Uniqueness.} \textit{The lex optimum $z_{\lex}^\star$ is unique, and the image point $p^\star:=(V_1^\star,\ldots,V_L^\star,J^\star)$ is uniquely determined.}

\noindent\textbf{(A3) Convexity.} \textit{The set $\mathcal{Z}$ is convex; each constraint function $g_{i,j}$ is convex; the performance objective $J$ is convex.}

Under~\textup{(A3)}, each $V_i$ is convex because $[a]_+^p$ is convex and
Non-decreasing convexity for $p$ in $\{1,2\}$ ensures that $V_i$ and $J$ are continuous, as is the achievement map $\Phi$. We impose compactness of $\mathcal{Z}$ as a uniform sufficient condition for existence. Coercivity of $J+\sum_i V_i$ is not sufficient to guarantee existence for all weighted sums $J+\sum_i w_i V_i$ with $w$ in $\mathbb{R}_{++}^L$, since for sufficiently small positive weights, descent directions of $J$ can remain unpenalized even when the equal-weight sum is bounded below.
Convexity of $\mathcal{Z}$ is the restrictive requirement. It holds for affine dynamics, or after reformulation to an equivalent convex feasible trajectory set. We treat (A3) as the exact scope of the theory. Deployment leaves this scope globally and re-enters it locally by linearizing the kinematic-bicycle dynamics at each inner solve.

\subsection{The Upper Image and the Lexicographic Point}
\label{sec:problem:upper}

The central geometric object is the image of $\mathcal{Z}$ under the \emph{achievement map}
\begin{equation}\label{eq:achieve}
  \Phi(z):=\bigl(V_1(z),V_2(z),\ldots,V_L(z),J(z)\bigr)\in\mathbb{R}^{L+1}.
\end{equation}

\begin{definition}[Upper image]\label{def:upper}
  The upper image of the achievement map is
  \[
    \overline{\mathcal{P}}:=\cl\bigl\{(y,t)\in\mathbb{R}^{L+1}:\exists z\in\mathcal{Z},\;V_i(z)\le y_i\;\forall i,\;J(z)\le t\bigr\},
  \]
  where $\cl(\cdot)$ denotes topological closure in $\mathbb{R}^{L+1}$.
\end{definition}

The upper image is standard in vector minimisation~\cite{ehrgott2005}:
it contains every achievable image point and every componentwise-worse point.

\begin{lemma}[Convexity of the upper image]\label{lem:convex}
  Under~\textup{(A3)}, the upper image $\overline{\mathcal{P}}$ is convex.
\end{lemma}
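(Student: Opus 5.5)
The plan is to prove convexity of the set before closure and then pass to the closure. Write
\[
\mathcal{P}:=\bigl\{(y,t)\in\mathbb{R}^{L+1}:\exists z\in\mathcal{Z},\;V_i(z)\le y_i\;\forall i,\;J(z)\le t\bigr\},
\]
so that $\overline{\mathcal{P}}=\cl(\mathcal{P})$. Since the closure of a convex subset of a finite-dimensional Euclidean space is convex, it suffices to show that $\mathcal{P}$ is convex.

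First I will record that each $V_i$ is convex under~(A3). The scalar map $a\mapsto[a]_+^p$ is convex and non-decreasing on $\mathbb{R}$ for $p\in\{1,2\}$. Composing it with the convex function $g_{i,j}$ therefore gives a convex function of $z$, and a finite sum of these compositions is convex. The performance objective $J$ is convex by~(A3) directly.

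Next I take $(y,t),(y',t')\in\mathcal{P}$ with witnesses $z,z'\in\mathcal{Z}$, and fix $\lambda\in[0,1]$. Set $z_\lambda:=\lambda z+(1-\lambda)z'$. By convexity of $\mathcal{Z}$ in~(A3), $z_\lambda\in\mathcal{Z}$. By convexity of $V_i$,
\[
V_i(z_\lambda)\le\lambda V_i(z)+(1-\lambda)V_i(z')\le\lambda y_i+(1-\lambda)y_i',
\]
and the same inequality holds for $J$ with $t,t'$. Hence $z_\lambda$ witnesses $\lambda(y,t)+(1-\lambda)(y',t')\in\mathcal{P}$, so $\mathcal{P}$ is convex.

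Finally, for the closure, take $p,q\in\cl(\mathcal{P})$ with sequences $p_n\to p$ and $q_n\to q$ in $\mathcal{P}$. Then $\lambda p_n+(1-\lambda)q_n\in\mathcal{P}$ and converges to $\lambda p+(1-\lambda)q$, which therefore lies in $\cl(\mathcal{P})$.

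There is no real obstacle in this argument. The only step that needs care is the composition rule, where monotonicity of $[\cdot]_+^p$ is essential. For $p=2$, the plain square of a convex function need not be convex, but the squared positive part is non-decreasing, so the composition rule still applies.
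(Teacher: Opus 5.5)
Your proof is correct and follows the same route as the paper: you show the pre-closure set is convex by taking the convex combination of two witnesses, using convexity of $\mathcal{Z}$, of each $V_i$, and of $J$, and then pass to the closure. Beyond the paper's one-line proof, you also spell out the monotone-composition argument for convexity of $V_i$ and the sequential argument that closure preserves convexity, which the paper only asserts.
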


\begin{proof}
  Convexity is maintained under closure, and the pre-closure set
  $\{(y,t):\exists z\in\mathcal{Z},\,V_i(z)\le y_i,\,J(z)\le t\}$ is convex
  because the convex combination of two witnesses, which lies in
  $\mathcal{Z}$ by~(A3), witnesses the combination by convexity of each
  $V_i$ and of $J$.
\end{proof}

The lex cascade traces a sequence of nested upper images induced by lex-optimal trajectory slices. For each $i$ we define
\begin{equation}\label{eq:nested}
  \overline{\mathcal{P}}_i:=\cl\bigl\{(y,t):\exists z\in\mathcal{F}_i,\;V_{i'}(z)\le y_{i'}\;\forall i',\;J(z)\le t\bigr\},
\end{equation}
so that $\overline{\mathcal{P}}=\overline{\mathcal{P}}_0\supseteq\overline{\mathcal{P}}_1\supseteq\cdots\supseteq\overline{\mathcal{P}}_L$, and the lex optimum has image
\begin{equation}\label{eq:leximage}
  p^\star:=\Phi(z_{\lex}^\star)=(V_1^\star,\ldots,V_L^\star,J^\star)\in\overline{\mathcal{P}}_L.
\end{equation}

\begin{remark}
  We avoid calling the $\overline{\mathcal{P}}_i$ ``faces'' of $\overline{\mathcal{P}}$. An exposed face of a convex set is defined by linear support; the set   $\overline{\mathcal{P}}_i$ in~\eqref{eq:nested} does not have this form, because it admits upward slack in coordinates already minimized at earlier cascade stages. We therefore use   ``nested upper image'' for the lex-restricted construction and reserve ``face'' for the
  linear-support sense.
\end{remark}

The lexicographic image is an extreme point. Polyhedrality, established below, yields a full-dimensional outward normal cone under $\ell_1$.

\begin{lemma}[Lex point is an extreme point]\label{lem:extreme}
  Under~\textup{(A1)--(A3)}, the image point $p^\star$ is an extreme point of
  $\overline{\mathcal{P}}$.
\end{lemma}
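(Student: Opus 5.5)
The plan is to show that $p^\star$ is the \emph{lexicographic minimum} of $\overline{\mathcal{P}}$ with respect to the coordinate order $(y_1,\ldots,y_L,t)$. Extremality then follows because a convex combination that equals the lex-minimum forces both endpoints to agree with it coordinate by coordinate. The argument has three steps. First, remove the closure in Definition~\ref{def:upper}. Second, prove a lex lower bound on every point of $\overline{\mathcal{P}}$. Third, run a coordinatewise induction on a putative convex decomposition of $p^\star$.

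\emph{Step 1 (closure is harmless).} By (A1) the set $\mathcal{Z}$ is compact, and $\Phi$ is continuous, as noted after (A3). Hence $\Phi(\mathcal{Z})$ is compact, and the pre-closure set equals $\Phi(\mathcal{Z})+\mathbb{R}^{L+1}_{\ge0}$. The Minkowski sum of a compact set and a closed set is closed, so this set is already closed. Every $q=(y,t)\in\overline{\mathcal{P}}$ therefore admits a witness $z\in\mathcal{Z}$ with $\Phi(z)\le q$ componentwise. I expect this to be the only delicate point. The half-space $\{q:q\ge_{\lex}p^\star\}$ is not closed, so the lex bound cannot simply be passed through a limit. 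Compactness is exactly what avoids that limit.

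\emph{Step 2 (lex lower bound).} Fix a witness $z$ for $q$. We claim $\Phi(z)\ge_{\lex}p^\star$, and argue along the cascade~\eqref{eq:cascade}. Since $z\in\mathcal{F}_0$, we have $V_1(z)\ge V_1^\star$. If this inequality is strict, the claim holds. Otherwise $z\in\mathcal{F}_1$, hence $V_2(z)\ge V_2^\star$, and so on down the levels. If equality holds at every level, then $z\in\mathcal{F}_L$, and~\eqref{eq:lexopt} gives $J(z)\ge J^\star$. Because $q\ge\Phi(z)$ componentwise, the first index at which $q$ and $p^\star$ differ has $q$ strictly larger there. Thus $q\ge_{\lex}p^\star$ for every $q\in\overline{\mathcal{P}}$.

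\emph{Step 3 (extremality).} Suppose $p^\star=\lambda a+(1-\lambda)b$ with $a,b\in\overline{\mathcal{P}}$ and $\lambda\in(0,1)$. By Step~2, $a_1,b_1\ge p^\star_1$. Their weighted average equals $p^\star_1$, so $a_1=b_1=p^\star_1$. Inductively, suppose $a$ and $b$ agree with $p^\star$ on the first $k-1$ coordinates. Then $a\ge_{\lex}p^\star$ forces $a_k\ge p^\star_k$, and likewise $b_k\ge p^\star_k$. Averaging again gives equality at coordinate $k$. After $L+1$ coordinates we obtain $a=b=p^\star$, so $p^\star$ is an extreme point. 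Convexity of $\overline{\mathcal{P}}$ (Lemma~\ref{lem:convex}) is needed only to make ``extreme point'' meaningful in the intended sense. (A2) serves only to make $p^\star$ well defined.
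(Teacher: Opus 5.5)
Your proof is correct and follows essentially the same route as the paper. Compactness of $\mathcal Z$ and continuity of $\Phi$ supply a witness $z\in\mathcal Z$ for every point of $\overline{\mathcal P}$, and a coordinate-by-coordinate forcing argument along the cascade then pins both endpoints of any convex decomposition to $p^\star$. The differences are only in packaging: you get the witnesses from closedness of $\Phi(\mathcal Z)+\mathbb R^{L+1}_{\ge0}$ rather than by extracting convergent subsequences, and you isolate the intermediate fact that $p^\star$ is the lexicographic minimum of $\overline{\mathcal P}$, whereas the paper tracks witnesses and slacks inline through the induction.
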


\begin{proof}
  Suppose for contradiction $p^\star=\lambda p^{(1)}+(1-\lambda)p^{(2)}$ for some   $\lambda\in(0,1)$ and $p^{(1)},p^{(2)}\in\overline{\mathcal{P}}$ with   $p^{(1)}\ne p^{(2)}$. By Definition~\ref{def:upper}, each $p^{(j)}$ is the limit of a sequence $p^{(j)}_n=\Phi(z^{(j)}_n)+r^{(j)}_n$ with $z^{(j)}_n\in\mathcal{Z}$ and $r^{(j)}_n\in\mathbb{R}_{\ge0}^{L+1}$. By compactness of $\mathcal{Z}$~(A1) we may pass to a subsequence along which $z^{(j)}_n\to z^{(j)}_\infty\in\mathcal{Z}$, and by continuity of $\Phi$ the slacks converge to $r^{(j)}_\infty:=p^{(j)}-\Phi(z^{(j)}_\infty)\in\mathbb{R}_{\ge0}^{L+1}$; every upper-image
  point thus admits a witness in $\mathcal{Z}$. The first coordinate of  $\overline{\mathcal{P}}$ is bounded below by $V_1^\star$, so $p^{(j)}_1\ge V_1^\star$ for $j=1,2$; their convex combination equals $V_1^\star$, forcing both to equal $V_1^\star$ and, through $p^{(j)}_1=V_1(z^{(j)}_\infty)+r^{(j)}_{\infty,1}$ with both terms bounded below, $V_1(z^{(j)}_\infty)=V_1^\star$ and $r^{(j)}_{\infty,1}=0$, i.e.  $z^{(j)}_\infty\in\mathcal{F}_1$. Iterating coordinate by coordinate along $2,\ldots,L$ gives $z^{(j)}_\infty\in\mathcal{F}_L$; at the performance coordinate, (A2) gives a unique minimum $J^\star$ on $\mathcal{F}_L$, forcing $J(z^{(j)}_\infty)=J^\star$. Hence $p^{(j)}=p^\star$ for both $j$, contradicting $p^{(1)}\ne p^{(2)}$. \end{proof}

Three named settings organize the results. In \emph{Setting~(P)} (polyhedral), $\mathcal Z$ is polyhedral, the $g_{i,j}$ are affine, $J$ is convex and piecewise affine, and $V_i=\sum_j[g_{i,j}]_+$; the upper image and its normal cone then inherit polyhedral structure. \emph{Setting~(P$'$)} relaxes $J$ to a convex continuously differentiable function (its full hypotheses are stated with Proposition~\ref{prop:convexJ}); the deployed inner problems of Section~\ref{sec:deployment} live in (P$'$). In \emph{Setting~(Q)}, $V_i=\sum_j[g_{i,j}]_+^2$: each $V_i$ is continuously differentiable, and a squared hinge supplies no first-order force at its boundary.

\section{The Equivalence Region and Its Dichotomy}
\label{sec:equivalence_region}
This section states the paper's central results in one dependency-ordered
arc, from the equivalence cone and the support-normal characterization
through the $\ell_1$ construction and the $\ell_2$ foreclosure to the
robust multi-instance region.

\subsection{The Equivalence Cone and the Normalized Region}
\label{sec:method:cone}

Weighted-sum scalarisation with weight $w$ selects the boundary point of the upper image exposed by the linear functional $\langle(w,1),\cdot\rangle$. The lexicographic point $p^\star$ is exposed by this functional---and hence recovered by the weighted sum---exactly when $(w,1)$ supports $\overline{\mathcal{P}}$ at $p^\star$, i.e. when $(w,1)$ lies in the outward normal cone.

\begin{definition}[Equivalence cone]\label{def:cone}
  The equivalence cone at $p^\star$ is the outward normal cone of the upper image at the lex point, restricted to non-negative rule weights and a positive performance coefficient,
  \begin{equation}\label{eq:cone}
    \widehat{\Omega}(p^\star):=
    N_{\overline{\mathcal{P}}}^+(p^\star)
    \cap\bigl(\mathbb{R}_{\ge0}^L\times\mathbb{R}_{>0}\bigr)
    \subset\mathbb{R}^{L+1}.
  \end{equation}
\end{definition}

\begin{definition}[Normalised equivalence region]\label{def:region}
  The normalized equivalence region at $p^\star$ is the unit-performance slice
  \begin{equation}\label{eq:region}
    \Omega(p^\star):=
    \bigl\{w\in\mathbb{R}_{++}^L:(w,1)\in
    N_{\overline{\mathcal{P}}}^+(p^\star)\bigr\}.
  \end{equation}
\end{definition}

A key distinction is that $\Omega(p^\star)$ is not a cone in $\mathbb{R}^L$. It is the intersection of a cone in $\mathbb{R}^{L+1}$ with the affine hyperplane $\{(w,1):w\in\mathbb{R}^L\}$, and therefore is not closed under positive scaling of $w$. The genuinely conical object is $\widehat{\Omega}(p^\star)$.

\subsection{Support-Normal Equivalence}
\label{sec:theory:main}

\begin{theorem}[Support-normal equivalence]\label{thm:main}
Under \textup{(A1)--(A3)}, for every $w\in\mathbb{R}_{++}^L$,
\[
z_{\lex}^\star\in
\argmin_{z\in\mathcal{Z}}
\Bigl[J(z)+\textstyle\sum_{i=1}^{L}w_iV_i(z)\Bigr]
\iff
w\in\Omega(p^\star).
\]
If, in addition, the WS minimizer is unique---for instance, when
$J+\sum_iw_iV_i$ is strictly convex on $\mathcal Z$---then
$z_{\mathrm{ws}}^\star(w)=z_{\lex}^\star$ for every
$w\in\Omega(p^\star)$.
\end{theorem}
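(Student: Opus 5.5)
The plan is to unfold both sides of the equivalence into the same inequality, namely that the linear functional $\langle(w,1),\cdot\rangle$ is minimized over $\overline{\mathcal{P}}$ at $p^\star$. By the definition of the outward normal cone, $(w,1)\in N^+_{\overline{\mathcal{P}}}(p^\star)$ means $-(w,1)\in N_{\overline{\mathcal{P}}}(p^\star)$. Reading off \eqref{eq:normalcone}, this is
\[
\langle (w,1),q\rangle\ \ge\ \langle (w,1),p^\star\rangle\qquad \forall q\in\overline{\mathcal{P}}.
\]
Since $p^\star=\Phi(z_{\lex}^\star)$ by \eqref{eq:leximage}, the right-hand side equals $J(z_{\lex}^\star)+\sum_i w_iV_i(z_{\lex}^\star)$. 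It then suffices to show that $\inf_{q\in\overline{\mathcal{P}}}\langle(w,1),q\rangle=\min_{z\in\mathcal{Z}}\bigl[J(z)+\sum_iw_iV_i(z)\bigr]$, with $p^\star\in\overline{\mathcal{P}}$ (this is immediate, since $z_{\lex}^\star\in\mathcal{Z}$).

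For the identification of the infimum I would argue in three steps.
\begin{enumerate}
\item Every $\Phi(z)$ with $z\in\mathcal{Z}$ lies in $\overline{\mathcal{P}}$, so the infimum over $\overline{\mathcal{P}}$ is at most the WS minimum.
\item Conversely, every point of the pre-closure set has the form $\Phi(z)+r$ with $r\in\mathbb{R}^{L+1}_{\ge0}$. Because $w\in\mathbb{R}^L_{++}$ and the last coefficient is $1>0$, we get $\langle(w,1),\Phi(z)+r\rangle\ge\langle(w,1),\Phi(z)\rangle$.
\item By continuity of the linear functional, this lower bound survives the closure. Alternatively, I would reuse the witness argument in the proof of Lemma~\ref{lem:extreme}, where compactness (A1) and continuity of $\Phi$ show that every point of $\overline{\mathcal{P}}$ is $\Phi(z_\infty)+r_\infty$ with $z_\infty\in\mathcal{Z}$ and $r_\infty\ge0$.
\end{enumerate}
This gives the equality of the two optimal values.

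With that identity, the forward direction follows directly. If $z_{\lex}^\star$ minimizes the WS objective, then for any $q\in\overline{\mathcal{P}}$ we have $\langle(w,1),q\rangle\ge\min_z[\cdots]=\langle(w,1),p^\star\rangle$, so $(w,1)$ lies in the outward normal cone and $w\in\Omega(p^\star)$ (positivity of $w$ is given). Conversely, if $w\in\Omega(p^\star)$, then for any $z\in\mathcal{Z}$ apply the support inequality with $q=\Phi(z)\in\overline{\mathcal{P}}$. This yields $J(z)+\sum_iw_iV_i(z)\ge J(z_{\lex}^\star)+\sum_iw_iV_i(z_{\lex}^\star)$, so $z_{\lex}^\star$ is a WS minimizer. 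The second claim is then immediate. If the WS argmin is a singleton, for instance under strict convexity on the convex set $\mathcal{Z}$ from (A3), it must coincide with $z_{\lex}^\star$, so $z_{\mathrm{ws}}^\star(w)=z_{\lex}^\star$.

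I expect the main obstacle to be only the closure bookkeeping: ensuring that the support inequality over $\overline{\mathcal{P}}$ is equivalent to the inequality over achievable images $\Phi(\mathcal{Z})$. Two facts handle it: the strict positivity of every coefficient of $(w,1)$, which makes the functional monotone on upward slack, and compactness from (A1) or continuity for limit points. I would handle it first as a short standalone claim before the two directions. Notably, convexity (A3) and Lemma~\ref{lem:extreme} are not needed for the equivalence itself; they matter only for the uniqueness remark and for interpreting $\Omega(p^\star)$ as a normal-cone slice.
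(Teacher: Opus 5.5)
Your proposal is correct and follows essentially the same route as the paper. Both directions rest on the same three facts: $\Phi(\mathcal{Z})\subset\overline{\mathcal{P}}$; monotonicity of $\langle(w,1),\cdot\rangle$ under nonnegative slack, because every coefficient of $(w,1)$ is positive; and passing to the limit to reach closure points. Stating these as an equality of optimal values before treating the two directions is only a reorganization, and the compactness-witness alternative in your third step is unnecessary, since continuity of the linear functional already suffices.
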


\begin{proof}
\emph{Forward.}
If $w\in\Omega(p^\star)$, then
$(w,1)\in N_{\overline{\mathcal{P}}}^+(p^\star)$; equivalently,
$\langle(w,1),q-p^\star\rangle\ge0$ for every
$q\in\overline{\mathcal P}$. For any $z\in\mathcal Z$,
$\Phi(z)\in\overline{\mathcal P}$, and hence
\[
 J(z)+\sum_iw_iV_i(z)
 \ge
 J(z_{\lex}^\star)+\sum_iw_iV_i(z_{\lex}^\star).
\]
Thus $z_{\lex}^\star$ is a WS minimizer.

\emph{Reverse.}
If $z_{\lex}^\star$ is a WS minimizer, then
$\langle(w,1),\Phi(z)-p^\star\rangle\ge0$ for every
$z\in\mathcal Z$. Any $q\in\overline{\mathcal P}$ is a limit of
$q_n=\Phi(z_n)+r_n$ with $z_n\in\mathcal Z$ and
$r_n\in\mathbb R_{\ge0}^{L+1}$. Since $(w,1)>0$,
\[
 \langle(w,1),q_n-p^\star\rangle
 =
 \langle(w,1),\Phi(z_n)-p^\star\rangle
 +\langle(w,1),r_n\rangle
 \ge0.
\]
Passing to the limit gives
$(w,1)\in N_{\overline{\mathcal P}}^+(p^\star)$, and therefore
$w\in\Omega(p^\star)$.
\end{proof}

Finite equivalence therefore corresponds to support at $p^\star$, and
$\Omega(p^\star)$ is the unit-performance slice of the supporting-normal
cone. The existence result of Sherali~\cite{sherali1982equivalent} and the
bounds of Eisenbrand~\etal~\cite{eisenbrand2003} can be read as statements
about the non-emptiness of, or sufficient membership conditions for, this
slice.

\subsection{\texorpdfstring{The $\ell_1$ Regime: a Computable Polyhedron}{The L1 Regime: a Computable Polyhedron}}
\label{sec:theory:regimes}

Setting~(P) of Section~\ref{sec:problem_formulation} gives the region
polyhedral structure.

\begin{theorem}[Polyhedrality of the region]\label{thm:poly}
Under Setting~\textup{(P)} and \textup{(A1)--(A3)}, the upper image
$\overline{\mathcal P}$ is a full-dimensional polyhedron; $p^\star$ is a
vertex, and hence $N_{\overline{\mathcal P}}^+(p^\star)$ is
full-dimensional; and $\Omega(p^\star)$ is a polyhedral region in
$\mathbb R^L$. It is non-empty if and only if
\[
 \{(w,1):w\in\mathbb R_{++}^L\}
 \cap N_{\overline{\mathcal P}}^+(p^\star)\ne\varnothing,
\]
and it has non-empty interior in $\mathbb R^L$ if and only if the same
unit-performance hyperplane meets
$\operatorname{int}N_{\overline{\mathcal P}}^+(p^\star)$ inside
$\mathbb R_{++}^L\times\{1\}$.
\end{theorem}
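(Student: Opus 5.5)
The plan is to realize $\overline{\mathcal P}$ as the projection of a polyhedron in lifted variables. Under Setting~(P), write $J(z)=\max_k(c_k^\top z+d_k)$ and introduce slacks $s_{i,j}$ for the hinges. Then $\overline{\mathcal P}$ is the closure of the set
\[
\bigl\{(y,t):\exists z\in\mathcal Z,\ s\ge0,\ s_{i,j}\ge g_{i,j}(z),\ \textstyle\sum_j s_{i,j}\le y_i,\ c_k^\top z+d_k\le t\ \forall k\bigr\}.
\]
Here $\mathcal Z$ is polyhedral, and $[a]_+\le s$ is equivalent to $s\ge0,\ s\ge a$. This lifted set is defined by finitely many linear inequalities in $(z,s,y,t)$. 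By Fourier--Motzkin elimination (or Minkowski--Weyl), its projection onto $(y,t)$ is a polyhedron and hence closed, so the closure is harmless. The set is full-dimensional because it is upward closed: for any achievable point $q$, the box $q+\mathbb R_{>0}^{L+1}$ is contained in it and is open.

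Next, $p^\star$ is a vertex. Lemma~\ref{lem:extreme} already gives that $p^\star$ is an extreme point of $\overline{\mathcal P}$, and for a polyhedron extreme points and vertices coincide. At a vertex $v$ of a polyhedron $\{q:Aq\le b\}$, the active rows $A_{I}$ have rank $L+1$. The normal cone $N_{\overline{\mathcal P}}(v)=\operatorname{cone}\{a_i:i\in I\}$ therefore spans $\mathbb R^{L+1}$, and a polyhedral cone that spans the space is full-dimensional. Negation preserves this, so $N^+_{\overline{\mathcal P}}(p^\star)$ is full-dimensional as well.

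Polyhedrality of $\Omega(p^\star)$ then follows because it is the preimage of the polyhedral cone $N^+_{\overline{\mathcal P}}(p^\star)$ under the affine map $w\mapsto(w,1)$, intersected with $\mathbb R^L_{++}$. This is a finite system of linear inequalities together with strict positivity. I will note that the set is relatively open in the positivity directions, so ``polyhedral region'' is meant in this sense. If the paper requires a closed polyhedron, I will use $\{w\ge0:(w,1)\in N^+\}$, whose intersection with $\mathbb R^L_{++}$ is $\Omega$.

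The non-emptiness criterion is a direct restatement of Definition~\ref{def:region}. For the interior criterion, the forward direction goes as follows. If $w_0\in\operatorname{int}\Omega(p^\star)$, then a ball around $w_0$ lifts into $N^+$. Since $N^+$ is a cone, the scaled points $\lambda(w,1)$ for $\lambda$ near $1$ together with $w$ in that ball fill a neighbourhood of $(w_0,1)$ in $\mathbb R^{L+1}$, so $(w_0,1)\in\operatorname{int}N^+$. Conversely, if $(w_0,1)\in\operatorname{int}N^+$ with $w_0\in\mathbb R^L_{++}$, then a small ball around $(w_0,1)$ stays in $N^+$, and its slice at the last coordinate equal to $1$ gives a ball around $w_0$ inside $\Omega$, after shrinking so as to stay in $\mathbb R^L_{++}$. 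The main obstacle is the first step: the projection argument has to correctly incorporate the piecewise-affine $J$ and the hinge epigraphs, and closedness of the projection must be justified, which the finite-generation (Minkowski--Weyl) theorem supplies.
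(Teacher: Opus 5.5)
Your proposal is correct and follows the paper's proof essentially step for step. Like the paper, you write the hinges and the piecewise-affine $J$ as epigraph constraints, project the resulting polyhedron to obtain a closed polyhedral $\overline{\mathcal P}$, get full-dimensionality from upward closure, invoke Lemma~\ref{lem:extreme} for the vertex and the full-dimensional normal cone, and slice at unit performance. Beyond the paper, you spell out the active-row rank argument and the conic-scaling argument for the interior criterion, which the paper says ``follow directly.'' You also correctly note that $\Omega(p^\star)$ is only relatively open because of the strict positivity, which makes the paper's ``polyhedral region'' more precise.
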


\begin{proof}
Represent each hinge by its epigraph: introducing $s_{i,j}\ge0$ with
$s_{i,j}\ge g_{i,j}(z)$ writes each level violation as the linear form
$\sum_js_{i,j}$ over a lifted feasible set cut out by affine constraints,
including the epigraph of the piecewise-affine $J$. The pre-closure set in
Definition~\ref{def:upper} is therefore a linear projection of a
polyhedron, augmented by $\mathbb R_{\ge0}^{L+1}$. Linear projections of
polyhedra are polyhedra, and polyhedra are closed, so
$\overline{\mathcal P}$ is polyhedral. It is full-dimensional because it
contains $p^\star+\mathbb R_{\ge0}^{L+1}$. By
Lemma~\ref{lem:extreme}, $p^\star$ is an extreme point, hence a vertex.
The normal cone at a vertex of a full-dimensional polyhedron is
full-dimensional. Finally,
$\Omega(p^\star)$ is the intersection of that polyhedral cone with the
unit-performance hyperplane, read in the $w$ coordinates. The stated
non-emptiness and full-dimensionality conditions follow directly.
\end{proof}

\begin{remark}[Augmented LICQ]\label{rem:auglicq}
LICQ is not required for Theorem~\ref{thm:poly}. When the boundary-hinge
gradients together with the active hard-inequality and equality gradients
are linearly independent (\emph{augmented LICQ}), the scaled subgradient and
hard-constraint multipliers are uniquely determined affinely by $w$; without
it, the region remains a polyhedral projection but may be lower-dimensional.
\end{remark}

In the polyhedral case (with $\ell_1$ penalties, affine constraints, and convex piecewise-affine $J$), the region admits an explicit half-space description. Let $\{a_1,\ldots,a_K\}$ denote the outward normals to the facets of $\overline{\mathcal{P}}$ incident to $p^\star$, so that
$N_{\overline{\mathcal{P}}}^+(p^\star)=\{\sum_k\mu_k a_k:\mu_k\ge0\}$. The region is the slice
\begin{equation}\label{eq:halfspace1}
  \Omega(p^\star)=
  \Bigl\{w\in\mathbb{R}_{++}^L:
  \exists\,\mu\ge0,\ \textstyle\sum_k\mu_k a_k=(w,1)\Bigr\},
\end{equation}
and eliminating $\mu$ yields
\begin{equation}\label{eq:halfspace2}
  \Omega(p^\star)=
  \bigl\{w\in\mathbb{R}_{++}^L:Cw\le d,\ Ew=f\bigr\}.
\end{equation}

For the KKT construction used with a differentiable convex $J$, define the strictly violated and boundary-binding hinge sets at the lexicographic solution by
\[
 \mathcal I_i^+:=\{j:g_{i,j}(z_{\lex}^\star)>0\},
 \qquad
 \mathcal I_i^0:=\{j:g_{i,j}(z_{\lex}^\star)=0\}.
\]
At a boundary-binding hinge, write
$\alpha_{i,j}\nabla g_{i,j}(z_{\lex}^\star)$ with
$\alpha_{i,j}\in[0,1]$, and introduce the \emph{scaled subgradient variable}
\begin{equation}\label{eq:scaled-subgradient}
  \beta_{i,j}:=w_i\alpha_{i,j},
  \qquad 0\le\beta_{i,j}\le w_i.
\end{equation}
Writing the active hard inequalities and equalities locally as
$h_a(z)\le0$ and $q_e(z)=0$, respectively, the stationarity condition becomes
\begin{align}
\label{eq:scaled-kkt}
0={}&\nabla J(z_{\lex}^\star)\\
&+\sum_{i=1}^{L}\left(
 w_i\sum_{j\in\mathcal I_i^+}\nabla g_{i,j}(z_{\lex}^\star)
 +\sum_{j\in\mathcal I_i^0}\beta_{i,j}\nabla g_{i,j}(z_{\lex}^\star)
 \right)\nonumber\\
&+\sum_{a\in\mathcal A}\lambda_a\nabla h_a(z_{\lex}^\star)
+\sum_{e\in\mathcal E}\mu_e\nabla q_e(z_{\lex}^\star),\nonumber
\end{align}
with $\lambda\ge0$ and $\mu$ free. Thus, the feasible set in
$(w,\beta,\lambda,\mu)$ is polyhedral; $\Omega(p^\star)$ is its projection onto
the $w$ coordinates. For piecewise-affine $J$, the same construction follows
after the standard epigraph lifting. Rank deficiency does not invalidate the
projection, but it can produce equality constraints and a lower-dimensional
region; it is therefore reported rather than silently removed.

The deployed inner problems use a convex quadratic tracking objective and
therefore do not satisfy Setting~(P). The following result retains the
polyhedral weight-region description without asserting that the upper image
itself is polyhedral.

\begin{proposition}[Polyhedral region under convex differentiable $J$]
\label{prop:convexJ}
Let Setting~\textup{(P$'$)} retain polyhedral $\mathcal Z$, affine
$g_{i,j}$, and $\ell_1$ hinges, while allowing $J$ to be convex and
differentiable near $\mathcal Z$ (in particular, convex quadratic). Under
\textup{(P$'$)} and \textup{(A1)--(A3)}, $\Omega(p^\star)$ is the
projection onto $w$ of the polyhedral scaled-KKT system
\eqref{eq:scaled-subgradient}--\eqref{eq:scaled-kkt}. Membership of any
fixed $w$ is decidable by one linear feasibility program. The
upper-image polyhedrality and vertex conclusions of
Theorem~\ref{thm:poly} need not hold.
\end{proposition}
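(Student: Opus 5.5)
The plan is to route everything through Theorem~\ref{thm:main}, which already identifies $\Omega(p^\star)$ with the set of $w\in\mathbb R_{++}^L$ for which $z_{\lex}^\star$ minimizes $F_w:=J+\sum_iw_iV_i$ over $\mathcal Z$. Under (P$'$) the function $F_w$ is convex, and $\mathcal Z$ is polyhedral. So $z_{\lex}^\star$ is a global minimizer if and only if $0\in\partial F_w(z_{\lex}^\star)+N_{\mathcal Z}(z_{\lex}^\star)$. Two standard facts make this sufficient and necessary without any constraint qualification: the subdifferential sum rule, which holds for finite convex functions, and the polyhedral form of the normal cone. We then make each piece explicit.

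\emph{Step 1: subdifferential.} $J$ is differentiable at $z_{\lex}^\star$, and each hinge $[g_{i,j}]_+$ with affine $g_{i,j}$ has a known subdifferential. It equals $\{\nabla g_{i,j}\}$ for $j\in\mathcal I_i^+$, equals $\{\alpha\nabla g_{i,j}:\alpha\in[0,1]\}$ for $j\in\mathcal I_i^0$, and equals $\{0\}$ otherwise. Since the hinges are finite convex functions, the sum rule gives $\partial F_w=\nabla J+\sum_iw_i\sum_j\partial[g_{i,j}]_+$.

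\emph{Step 2: normal cone.} Because $\mathcal Z$ is polyhedral, $N_{\mathcal Z}(z_{\lex}^\star)$ is generated by the active inequality gradients with $\lambda_a\ge0$ and the equality gradients with $\mu_e$ free. This is Farkas' lemma, and no constraint qualification is needed for affine constraints.

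\emph{Step 3: linearization.} Optimality is therefore equivalent to the existence of $\alpha_{i,j}\in[0,1]$, $\lambda\ge0$ and $\mu$ satisfying the stationarity equation. This condition is bilinear in $(w,\alpha)$. The substitution $\beta_{i,j}=w_i\alpha_{i,j}$ turns it into \eqref{eq:scaled-kkt} with $0\le\beta_{i,j}\le w_i$. The substitution is a bijection because $w_i>0$, with inverse $\alpha=\beta/w_i$. The resulting system is linear in $(w,\beta,\lambda,\mu)$, so it is a polyhedron, and $\Omega(p^\star)$ is its projection onto $w$ intersected with $\mathbb R_{++}^L$. Fixing $w$ leaves a linear feasibility program in $(\beta,\lambda,\mu)$, which decides membership.

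\emph{Step 4: the final clause.} This needs a counterexample rather than a proof. Take $L=1$, $\mathcal Z=[-1,1]$, $g(z)=z$ and $J(z)=(z-1)^2$. The achievable image $\{([z]_+,(z-1)^2)\}$ has a curved boundary for $z\in(0,1]$. Hence $\overline{\mathcal P}$ is not polyhedral, even though $\Omega$ remains an interval. The lex point may still be a vertex here, so to show that the vertex conclusion can also fail, add a zero-violation rule or use a smooth $J$ whose minimizer lies interior to $\mathcal F_L$. That makes the boundary of $\overline{\mathcal P}$ smooth at $p^\star$, with a single-ray normal cone.

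The main obstacle is getting the hypotheses exactly right in Steps 1 and 2. Differentiability is needed only at $z_{\lex}^\star$, and it is "near $\mathcal Z$" that secures the gradient at a boundary point. We must also check that no constraint qualification is silently required, which holds because everything besides $J$ is polyhedral. Finally, the projection must be taken before intersecting with $\mathbb R_{++}^L$ so that the substitution is valid.
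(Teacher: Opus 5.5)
Your Steps~1--3 are the paper's proof. It likewise writes convex first-order optimality $0\in\nabla J+\sum_iw_i\partial V_i+N_{\mathcal Z}$ at $z_{\lex}^\star$, splits the hinges into strictly violated, boundary-binding and strictly satisfied, uses the polyhedral normal cone of $\mathcal Z$, substitutes $\beta_{i,j}=w_i\alpha_{i,j}$, and invokes Theorem~\ref{thm:main} to identify the projection with $\Omega(p^\star)$. Your remarks that no constraint qualification is needed and that the substitution is invertible only make explicit what the paper leaves implicit. The paper gives no argument for the final clause, so your Step~4 goes beyond it. Its first half is correct: for $\mathcal Z=[-1,1]$, $g(z)=z$, $J=(z-1)^2$ one gets $\overline{\mathcal P}=\{(y,t):y\ge0,\ t\ge(1-\min\{y,1\})^2\}$, which is not polyhedral, while $\Omega(p^\star)=[2,\infty)$.

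The second half of Step~4 is wrong, and it cannot be repaired inside Setting~(P$'$), because there the outward normal cone at $p^\star$ is \emph{always} full-dimensional. The argument runs stage by stage:
\begin{itemize}
\item After the epigraph lifting, each cascade stage $k\le L$ is a linear program. LP duality then gives $\kappa^{(k)}\in\mathbb R^{k-1}_{\ge0}$ such that $z_{\lex}^\star$ minimizes $V_k+\sum_{i<k}\kappa^{(k)}_iV_i$ over $\mathcal Z$.
\item The final stage minimizes a differentiable convex $J$ over a polyhedron. Hence KKT multipliers $\nu\ge0$ exist, and $z_{\lex}^\star$ minimizes $J+\sum_i\nu_iV_i$ over $\mathcal Z$.
\item Exactly as in the forward half of Theorem~\ref{thm:main}, each of these facts places a vector in $N_{\overline{\mathcal P}}^+(p^\star)$: namely $e_1$, $(\kappa^{(2)},1,0,\dots,0)$, \dots, $(\nu,1)$.
\item These vectors are lower-triangular with unit diagonal, so they span $\mathbb R^{L+1}$.
\end{itemize}
Consequently there is always a kink at $p^\star$, never a smooth boundary or a single-ray normal cone.

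Both of your suggested modifications confirm this. With $J=(z+\tfrac12)^2$ in your example, $\overline{\mathcal P}=\mathbb R^2_{\ge0}$, and $p^\star=0$ is its corner with the whole orthant as normal cone. A never-violated extra rule merely adjoins a new coordinate direction to the cone.

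Since $p^\star$ is also always an extreme point (Lemma~\ref{lem:extreme}), the only defensible reading of ``vertex conclusions need not hold'' is terminological: the polyhedral notion of vertex is unavailable once $\overline{\mathcal P}$ is not a polyhedron. You should delete the single-ray claim and rest the final clause on the polyhedrality counterexample alone.
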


\begin{proof}
For fixed $w$, convex first-order optimality gives
\begin{align}
z_{\lex}^\star&\in\argmin_{z\in\mathcal Z}
 \left[J(z)+\sum_iw_iV_i(z)\right]\\
&\iff
 0\in\nabla J(z_{\lex}^\star)
   +\sum_iw_i\partial V_i(z_{\lex}^\star)
   +N_{\mathcal Z}(z_{\lex}^\star).\nonumber
\end{align}

For a strictly violated hinge, the contribution is
$w_i\nabla g_{i,j}(z_{\lex}^\star)$; for a boundary-binding hinge it is
$\beta_{i,j}\nabla g_{i,j}(z_{\lex}^\star)$ with
$0\le\beta_{i,j}\le w_i$; and a strictly satisfied hinge contributes
zero. Since $N_{\mathcal Z}(z_{\lex}^\star)$ is generated by the active
polyhedral hard-constraint gradients, the complete condition is affine in
$(w,\beta,\lambda,\mu)$. Its feasible set is polyhedral, and its
projection onto $w$ is therefore polyhedral. By
Theorem~\ref{thm:main}, that projection is exactly $\Omega(p^\star)$.
Fixing $w$ leaves a linear feasibility program in
$(\beta,\lambda,\mu)$.
\end{proof}

This proposition justifies offline membership and calibration calculations
for convex-quadratic inner problems with $\ell_1$ hinges, provided the scaled
subgradient variables are used.

For the exact compliance vector
$b(z)=(\mathbb 1[V_1(z)=0],\ldots,\mathbb 1[V_L(z)=0])$ and a tolerance
$\boldsymbol\epsilon>0$, let
$b_{\boldsymbol\epsilon}(z)
=(\mathbb 1[V_i(z)\le\epsilon_i])_i$.

\begin{theorem}[$\ell_1$ exact compliance equivalence]\label{thm:l1comp}
Under Setting~\textup{(P)} and \textup{(A1)--(A3)}, any
$w\in\Omega(p^\star)$ satisfying
\[
 (w,1)\in
 \operatorname{relint}N_{\overline{\mathcal P}}^+(p^\star)
\]
exposes $p^\star$ as a singleton and therefore yields
$z_{\mathrm{ws}}^\star(w)=z_{\lex}^\star$. In particular, if the
unit-performance hyperplane intersects
$\operatorname{int}N_{\overline{\mathcal P}}^+(p^\star)$ inside the
positive orthant, then
$\operatorname{int}\Omega(p^\star)$ consists of exact-equivalence weights
and reproduces the exact compliance vector.
\end{theorem}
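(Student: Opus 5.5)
The plan is to reduce the statement to a standard fact about polyhedra: at a vertex, an outward normal in the interior of the normal cone exposes only that vertex. I then lift this image-space statement back to trajectory space using the minimizer characterization of Theorem~\ref{thm:main} and the cascade structure.

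\textbf{Step 1 (image space).} By Theorem~\ref{thm:poly}, under Setting~(P) the upper image $\overline{\mathcal P}$ is a full-dimensional polyhedron and $p^\star$ is a vertex. Its outward normal cone $N^+_{\overline{\mathcal P}}(p^\star)$ is therefore a full-dimensional polyhedral cone, so $\operatorname{relint}N^+_{\overline{\mathcal P}}(p^\star)=\operatorname{int}N^+_{\overline{\mathcal P}}(p^\star)$. Fix $\eta:=(w,1)$ in this interior and let $F_\eta:=\argmin_{q\in\overline{\mathcal P}}\langle\eta,q\rangle$ be the exposed face; it contains $p^\star$ because $\eta$ supports $\overline{\mathcal P}$ there.

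Suppose $q\in F_\eta$ with $q\neq p^\star$, and set $d:=q-p^\star$. Every $\xi\in N^+_{\overline{\mathcal P}}(p^\star)$ satisfies $\langle\xi,d\rangle\ge0$, so the cone lies in the closed half-space $\{\xi:\langle\xi,d\rangle\ge0\}$. But $\langle\eta,d\rangle=0$ puts $\eta$ on the boundary hyperplane of that half-space, which contradicts $\eta\in\operatorname{int}N^+$. Hence $F_\eta=\{p^\star\}$. This argument does not even use polyhedrality beyond the identification of relint with int, and that identification comes from full-dimensionality.

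\textbf{Step 2 (trajectory space).} Let $z$ be any WS minimizer for $w$. By Theorem~\ref{thm:main}, $z_{\lex}^\star$ is also a WS minimizer, so $\langle\eta,\Phi(z)\rangle=\langle\eta,p^\star\rangle=\min_{\overline{\mathcal P}}\langle\eta,\cdot\rangle$. Therefore $\Phi(z)\in F_\eta=\{p^\star\}$, which gives $V_i(z)=V_i^\star$ for all $i$ and $J(z)=J^\star$.

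Now iterate through the cascade. Since $V_1(z)=V_1^\star=\min_{\mathcal Z}V_1$, we get $z\in\mathcal F_1$. Inductively, $z\in\mathcal F_{i-1}$ and $V_i(z)=V_i^\star$ give $z\in\mathcal F_i$. Hence $z\in\mathcal F_L$ with $J(z)=J^\star$, so $z$ lies in the argmin~\eqref{eq:lexopt}, and uniqueness (A2) forces $z=z_{\lex}^\star$. The WS argmin is thus the singleton $\{z_{\lex}^\star\}$, i.e.\ $z_{\mathrm{ws}}^\star(w)=z_{\lex}^\star$. Because $V(z_{\mathrm{ws}}^\star)=V(z_{\lex}^\star)$ coordinatewise, the exact compliance vector $b$ coincides, and so does $b_{\boldsymbol\epsilon}$ for every tolerance.

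\textbf{Step 3 (the ``in particular'' clause).} This is where I expect the main care to be needed. I must show that every $w\in\operatorname{int}\Omega(p^\star)$ satisfies $(w,1)\in\operatorname{int}N^+_{\overline{\mathcal P}}(p^\star)$.

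Write $A:=\{(w,1):w\in\mathbb R^L\}$ and $C:=N^+_{\overline{\mathcal P}}(p^\star)$. By hypothesis $A$ meets $\operatorname{int}C=\operatorname{relint}C$, so the relative-interior calculus for convex sets (Rockafellar, Thm.~6.5) gives $\operatorname{relint}(C\cap A)=\operatorname{relint}C\cap A$. Moreover, $C\cap A$ is full-dimensional within $A$ because it contains a point of $\operatorname{int}C$. Hence its relative interior equals its interior read in the $w$ coordinates.

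The additional constraint $w\in\mathbb R^L_{++}$ is open, so it does not change interiors: $\operatorname{int}\Omega(p^\star)=\{w\in\mathbb R^L_{++}:(w,1)\in\operatorname{int}C\}$. Steps 1--2 then apply to each such $w$.

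The only delicate points are this relint/slice identity and keeping the sign convention of the \emph{outward} cone consistent in Step~1. Everything else follows from Theorems~\ref{thm:main} and~\ref{thm:poly} and (A2).
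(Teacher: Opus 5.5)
Your proof is correct and follows the paper's argument. The chain is the same: full-dimensionality of the normal cone (Theorem~\ref{thm:poly}) lets the relative-interior direction $(w,1)$ expose $p^\star$ as a singleton; every WS minimizer is then pushed through $\mathcal F_1,\ldots,\mathcal F_L$ to $z_{\lex}^\star$ by (A2); and the relative-interior intersection identity gives the final clause. You only make explicit what the paper leaves implicit, namely the half-space argument for singleton exposure and the openness of $\mathbb R^L_{++}$ when passing to $\operatorname{int}\Omega(p^\star)$.
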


\begin{proof}
A direction in the relative interior of the normal cone of a polyhedron at
a vertex exposes that vertex as a singleton.
Therefore every WS minimizer $z$ satisfies $\Phi(z)=p^\star$. It follows
that $z\in\mathcal F_L$ and $J(z)=J^\star$; by~(A2),
$z=z_{\lex}^\star$. If the unit-performance hyperplane meets the interior
of the normal cone, the relative-interior intersection identity gives
\[
 \operatorname{int}\Omega(p^\star)
 =
 \{w:(w,1)\in
 \operatorname{int}N_{\overline{\mathcal P}}^+(p^\star)\},
\]
inside $\mathbb R_{++}^L$, proving the final statement.
\end{proof}

\subsection{\texorpdfstring{The $\ell_2$ Regime: Foreclosure and Rates}{The L2 Regime: Foreclosure and Rates}}
\label{sec:theory:l2}

The convex-quadratic \emph{penalty} differs structurally. In Setting~(Q),
each $V_i$ is continuously differentiable and a squared hinge supplies no
first-order force at its boundary.

\begin{lemma}[Vanishing boundary force under $\ell_2$]\label{lem:vanish}
Under Setting~\textup{(Q)}, if $g_{i,j}(z)=0$, then
\[
 \nabla\!\left([g_{i,j}(z)]_+^2\right)
 =2[g_{i,j}(z)]_+\nabla g_{i,j}(z)=0.
\]
In particular, $\nabla V_i(z_{\lex}^\star)=0$ whenever
$V_i^\star=0$.
\end{lemma}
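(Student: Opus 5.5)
The plan is a direct chain-rule computation, followed by summation over the rules of a level and a nonnegativity argument.

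\emph{Differentiability of the squared hinge.} First I would show that $\phi(a):=[a]_+^2$ is continuously differentiable on $\mathbb R$ with $\phi'(a)=2[a]_+$. For $a>0$ this is immediate since $\phi(a)=a^2$. For $a<0$ we have $\phi\equiv0$ near $a$. At $a=0$ the difference quotient satisfies $|\phi(h)-\phi(0)|/|h|\le |h|\to0$, so $\phi'(0)=0=2[0]_+$. The derivative $2[a]_+$ is continuous, so $\phi\in C^1$.

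\emph{Chain rule and first claim.} In Setting~(Q) each $g_{i,j}$ is differentiable near the point of interest (affine in the deployed case; convex and differentiable in general). The chain rule then gives $\nabla\phi(g_{i,j}(z))=2[g_{i,j}(z)]_+\nabla g_{i,j}(z)$. At a point with $g_{i,j}(z)=0$ the scalar factor $[g_{i,j}(z)]_+$ vanishes, so the gradient is $0$, whatever the value of $\nabla g_{i,j}(z)$. This proves the displayed identity.

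\emph{The level gradient when $V_i^\star=0$.} By definition $V_i^\star=V_i(z_{\lex}^\star)=\sum_j[g_{i,j}(z_{\lex}^\star)]_+^2$. This is a sum of nonnegative terms, so $V_i^\star=0$ forces $[g_{i,j}(z_{\lex}^\star)]_+=0$, i.e.\ $g_{i,j}(z_{\lex}^\star)\le0$, for every $j$. Summing the chain-rule expression over $j$ gives
\[
\nabla V_i(z_{\lex}^\star)=\sum_{j=1}^{m_i}2[g_{i,j}(z_{\lex}^\star)]_+\nabla g_{i,j}(z_{\lex}^\star)=0 .
\]
Every term vanishes. Strictly satisfied hinges contribute zero because $\phi$ is locally constant there. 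Boundary-binding hinges contribute zero by the first claim.

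\emph{Main point of care.} The only delicate step is differentiability at the kink $a=0$. Here, unlike the $\ell^1$ case with its subdifferential $[0,1]$ used in~\eqref{eq:scaled-subgradient}, the one-sided derivatives coincide, so no subgradient multiplier $\beta_{i,j}$ appears. I would stress this contrast, since it is what later removes the finite-weight boundary force.
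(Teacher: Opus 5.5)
Your proposal is correct and follows the paper's argument: the paper gives no separate proof and treats the displayed chain-rule identity $\nabla[g_{i,j}]_+^2=2[g_{i,j}]_+\nabla g_{i,j}$ as self-evident, with the factor $[g_{i,j}]_+$ vanishing at the boundary. The ``in particular'' clause follows, as you argue, by summing over $j$ once $V_i(z_{\lex}^\star)=V_i^\star=0$ forces every $g_{i,j}(z_{\lex}^\star)\le0$; your check that $a\mapsto[a]_+^2$ is $C^1$ at the kink is the detail the paper leaves implicit.
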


\begin{theorem}[Local $\ell_2$ penalty branch]\label{thm:l2asymp}
Under Setting~\textup{(Q)} and the regularity conditions
\textup{(R1)--(R5)} of Appendix~\ref{app:l2}, let level $i$ satisfy
$V_i(z_{\lex}^\star)=0$, and let
$\lambda^\star(w_{-i})>0$ denote the limiting multipliers of its
boundary-binding hinges for $w_{-i}$ in a compact
$K\subset\mathbb R_{++}^{L-1}$. Then there is
$\overline w_i<\infty$ and, for every $w_{-i}\in K$ and
$w_i\ge\overline w_i$, a locally unique strict-local-minimizer branch
$z_{\mathrm{loc}}(w_i,w_{-i})$ near $z_{\lex}^\star$ such that, uniformly
over $K$,
\begin{equation}\label{eq:asymprate}
\begin{aligned}
 g_{i,j}\!\left(z_{\mathrm{loc}}(w_i,w_{-i})\right)
 &=\frac{\lambda_j^\star(w_{-i})}{2w_i}+O(w_i^{-2}),\\
 V_i\!\left(z_{\mathrm{loc}}(w_i,w_{-i})\right)
 &=\frac{\|\lambda^\star(w_{-i})\|_2^2}{4w_i^2}
   +O(w_i^{-3}).
\end{aligned}
\end{equation}
Moreover, if $\lambda^\star(w_{-i})\ne0$, then
$z_{\lex}^\star$ is not a stationary point of the finite-weight smooth
penalized problem for any $w_i<\infty$, and each component with
$\lambda_j^\star>0$ is strictly violated along the local branch for all
sufficiently large finite $w_i$.
\end{theorem}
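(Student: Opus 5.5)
The plan is to treat the finite-weight penalized problem as a regular perturbation of the KKT system at $z_{\lex}^\star$ and apply the implicit function theorem in the small parameter $\varepsilon:=1/w_i$. Fix $w_{-i}\in K$ and regard the other levels' penalties as part of a smooth convex objective $\tilde J:=J+\sum_{i'\ne i}w_{i'}V_{i'}$; under (R1)--(R5) I take these to provide (i) smoothness of $\tilde J$ and $g_{i,j}$ near $z_{\lex}^\star$; (ii) linear independence of the boundary-binding gradients $\nabla g_{i,j}$, $j\in\mathcal I_i^0$, together with the active hard-constraint gradients; (iii) strict complementarity and second-order sufficiency on the critical cone; and (iv) continuity in $w_{-i}$ of the limiting multipliers $\lambda^\star(w_{-i})$. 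These are the multipliers of the constrained problem $\min \tilde J$ subject to $g_{i,j}\le0$, $j\in\mathcal I_i^0$, which is the $w_i\to\infty$ limit of the penalty.

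\textbf{Step 1: lifted system.} Stationarity of $\tilde J+w_iV_i$ (plus hard-constraint multipliers) reads $\nabla\tilde J+\sum_j 2w_i[g_{i,j}]_+\nabla g_{i,j}+\dots=0$. On a neighbourhood where the binding hinges with $\lambda_j^\star>0$ are positive and strictly satisfied hinges stay negative, introduce $\lambda_j:=2w_i g_{i,j}(z)$, i.e.\ $g_{i,j}(z)-\varepsilon\lambda_j/2=0$. This gives a smooth system $F(z,\lambda,\nu;\varepsilon,w_{-i})=0$. At $\varepsilon=0$ it is exactly the KKT system of the limiting constrained problem, solved by $(z_{\lex}^\star,\lambda^\star,\nu^\star)$. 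Its Jacobian in $(z,\lambda,\nu)$ is the standard KKT matrix, nonsingular under LICQ and second-order sufficiency.

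\textbf{Step 2: expansion.} The implicit function theorem yields a $C^1$ branch $(z,\lambda,\nu)(\varepsilon,w_{-i})$, and compactness of $K$ with continuity gives a uniform radius, hence $\overline w_i$. Then $\lambda(\varepsilon)=\lambda^\star+O(\varepsilon)$, and the defining equation gives $g_{i,j}=\varepsilon\lambda_j/2=\lambda_j^\star/(2w_i)+O(w_i^{-2})$. Positivity of $\lambda_j^\star$ makes $g_{i,j}>0$, which validates the hinge being active in $F$. For components with $\lambda_j^\star=0$ I would need strict complementarity to exclude them, or to treat them as $O(\varepsilon^2)$; here I would restrict the statement to $\lambda^\star>0$ as hypothesized. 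Squaring and summing gives $V_i=\|\lambda^\star\|^2/(4w_i^2)+O(w_i^{-3})$. Strict local minimality comes from persistence of the positive-definite reduced Hessian: the added term $2w_i\nabla g\nabla g^\top$ only helps on directions transverse to the constraints, and by continuity the rest stays positive.

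\textbf{Step 3: non-stationarity.} This follows from Lemma~\ref{lem:vanish}. At $z_{\lex}^\star$ the gradient of $w_iV_i$ vanishes, so stationarity of the penalized problem there would require $0\in\nabla\tilde J(z_{\lex}^\star)+N_{\mathcal Z}$. That would make the KKT multiplier representation hold with $\lambda=0$, and uniqueness of multipliers under LICQ then forces $\lambda^\star=0$, a contradiction.

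The main obstacle is Step~2's uniformity and the local validity of the active-set guess. One must show that a single neighbourhood works for all $w_{-i}\in K$ and all large $w_i$, and that the branch stays in the region where the inactive hinges and hard constraints keep their status. I would handle this by applying the implicit function theorem at each point of $K\times\{0\}$ and extracting a finite subcover.
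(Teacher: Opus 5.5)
Your proposal is correct and follows essentially the same route as the paper. The shared steps are: the lifting $\lambda_j=2w_ig_{i,j}(z)$ with $1/w_i$ as small parameter; the implicit-function theorem at the nonsingular KKT matrix of $(\mathrm P_\infty)$ (full row rank of $G$ plus second-order sufficiency); reading off $g_{i,j}=\tfrac{\eta}{2}\lambda_j(\eta)$ and squaring; the Hessian argument using the $2w_i\nabla g\nabla g^\top$ term; uniformity by compactness of $K$; and non-stationarity from Lemma~\ref{lem:vanish}. Your multiplier-uniqueness phrasing of the last step is equivalent to the paper's $\nabla F_w(z^\star)=-G^\top\lambda^\star\neq0$. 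The only difference is that you carry hard-constraint multipliers $\nu$ explicitly. The paper instead works with $z^\star\in\operatorname{int}\mathcal Z$ and only remarks that the active-manifold case goes through under joint LICQ and second-order sufficiency; your version additionally needs strict complementarity of $\nu^\star$ so that $\nu(\eta)\ge0$ along the branch.
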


Theorem~\ref{thm:l2asymp} is deliberately local: it constructs and
characterizes the minimizer branch near $z_{\lex}^\star$. Identification of
that branch with the \emph{global} WS minimizer requires an additional
global condition, such as global strong convexity of the penalized
objective or a separate argument showing that all global minimizers enter
the stated neighbourhood. Appendix~\ref{app:l2} gives the perturbation
argument; the single-binding-hinge closed form of the limiting multiplier
is derived in the supplementary material.

\begin{proposition}[$\ell_2$ tolerance bounds and scaling]\label{prop:l2tol}
Under the assumptions of Theorem~\ref{thm:l2asymp}, and assuming that
levels already violated at $z_{\lex}^\star$ are separated from their
compliance thresholds by a positive margin, there exist uniform constants
$C_{g,i},C_{V,i}\ge0$ and $\overline w_i<\infty$ such that, for
$w_{-i}\in K$ and $w_i\ge\overline w_i$,
\begin{align}
 \max_{j\in\mathcal B}
 \left|g_{i,j}(z_{\mathrm{loc}})\right|
 &\le
 \frac{\|\lambda^\star(w_{-i})\|_\infty}{2w_i}
 +\frac{C_{g,i}}{w_i^2},
 \label{eq:l2-raw-bound}\\
 V_i(z_{\mathrm{loc}})
 &\le
 \frac{\|\lambda^\star(w_{-i})\|_2^2}{4w_i^2}
 +\frac{C_{V,i}}{w_i^3}.
 \label{eq:l2-V-bound}
\end{align}
Consequently, sufficient finite thresholds are
\begin{align}
 w_i&\ge
 \max\left\{
 \overline w_i,\,
 \frac{\|\lambda^\star(w_{-i})\|_\infty}{\epsilon_i},\,
 \sqrt{\frac{2C_{g,i}}{\epsilon_i}}
 \right\}
 &&\text{for }|g_{i,j}|\le\epsilon_i,\nonumber\\
 w_i&\ge
 \max\left\{
 \overline w_i,\,
 \frac{\|\lambda^\star(w_{-i})\|_2}{\sqrt{2\epsilon_i}},\,
 \left(\frac{2C_{V,i}}{\epsilon_i}\right)^{1/3}
 \right\}
 &&\text{for }V_i\le\epsilon_i.\nonumber
\end{align}
As $\epsilon_i\downarrow0$, the leading scalings are $\Theta(\epsilon_i^{-1})$ for raw-constraint tolerance and $\Theta(\epsilon_i^{-1/2})$ for violation-functional tolerance.
\end{proposition}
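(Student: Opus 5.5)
The plan is to derive Proposition~\ref{prop:l2tol} directly from the expansions \eqref{eq:asymprate} of Theorem~\ref{thm:l2asymp}. The only extra work is to make their remainder terms uniform over $K$ and then invert the resulting bounds.

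\emph{Step 1: bounding the remainders.} Theorem~\ref{thm:l2asymp} provides the remainders $O(w_i^{-2})$ and $O(w_i^{-3})$ uniformly over $w_{-i}\in K$ once $w_i\ge\overline w_i$. This means there are constants $C_{g,i},C_{V,i}$, independent of $w_{-i}\in K$, that dominate the remainders on $[\overline w_i,\infty)\times K$. I would obtain these constants from the perturbation argument of Appendix~\ref{app:l2}. That argument applies the implicit function theorem in the variable $1/w_i$, with $w_{-i}$ as a parameter, to the reduced stationarity system. Since $K$ is compact and the branch is $C^1$ jointly in $(1/w_i,w_{-i})$, its second derivatives are bounded there, and this gives uniform constants. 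I would also check that $w_{-i}\mapsto\lambda^\star(w_{-i})$ is continuous on $K$, so that $\sup_K\|\lambda^\star\|$ is finite.

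\emph{Step 2: the two bounds.} For $j\in\mathcal B$, the first line of \eqref{eq:asymprate} together with the triangle inequality and $\lambda_j^\star\le\|\lambda^\star\|_\infty$ gives \eqref{eq:l2-raw-bound}. The second line gives \eqref{eq:l2-V-bound} in the same way. I would also need to handle levels that are already violated at $z_{\lex}^\star$. By the margin assumption, together with continuity of the branch as $w_i\to\infty$, these levels stay on the same side of their thresholds, possibly after enlarging $\overline w_i$, so they do not affect the compliance statements.

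\emph{Step 3: the thresholds.} To guarantee $|g_{i,j}|\le\epsilon_i$, it suffices that each term on the right of \eqref{eq:l2-raw-bound} is at most $\epsilon_i/2$:
\begin{itemize}
\item the condition $\|\lambda^\star\|_\infty/(2w_i)\le\epsilon_i/2$ is equivalent to $w_i\ge\|\lambda^\star\|_\infty/\epsilon_i$;
\item the condition $C_{g,i}/w_i^2\le\epsilon_i/2$ is equivalent to $w_i\ge\sqrt{2C_{g,i}/\epsilon_i}$.
\end{itemize}
For $V_i\le\epsilon_i$ I would split the same way:
\begin{itemize}
\item the condition $\|\lambda^\star\|_2^2/(4w_i^2)\le\epsilon_i/2$ is equivalent to $w_i\ge\|\lambda^\star\|_2/\sqrt{2\epsilon_i}$;
\item the condition $C_{V,i}/w_i^3\le\epsilon_i/2$ is equivalent to $w_i\ge(2C_{V,i}/\epsilon_i)^{1/3}$.
\end{itemize}
Taking the maximum with $\overline w_i$ gives the stated thresholds.

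\emph{Step 4: the scaling.} As $\epsilon_i\downarrow0$, the terms $\epsilon_i^{-1}$ and $\epsilon_i^{-1/2}$ dominate $\epsilon_i^{-1/2}$ and $\epsilon_i^{-1/3}$ respectively. This gives upper bounds of order $O(\epsilon_i^{-1})$ and $O(\epsilon_i^{-1/2})$. The matching lower bounds, which make the scalings $\Theta$ rather than $O$, come from the exact leading terms in \eqref{eq:asymprate}, assuming $\lambda^\star\ne0$. For example, $g_{i,j}\ge\lambda_j^\star/(2w_i)-C/w_i^2$, so $g_{i,j}\le\epsilon_i$ forces $w_i\gtrsim\lambda_j^\star/(2\epsilon_i)$. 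The analogous argument applies to $V_i$.

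\emph{Main obstacle.} I expect Step 1 to be the hardest part: making the remainders uniform over $K$. It relies on (R1)--(R5) keeping the active set and the second-order sufficiency fixed over all of $K$. If that is not given directly, I would first establish the uniform bound locally around each $w_{-i}\in K$ and then pass to all of $K$ by a finite subcover.
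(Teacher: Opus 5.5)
Your proposal is correct and takes the paper's route: Proposition~\ref{prop:l2tol} follows from the uniform expansions of Theorem~\ref{thm:app:path} by the triangle inequality and the $\epsilon_i/2$ split, which is exactly your Steps~2--4. One small correction to Step~1, whose uniformity claim is in any case already supplied by Theorem~\ref{thm:app:path}: a $C^1$ branch does not give bounded second derivatives, but none are needed, because $g_{i,j}(z(\eta))=\tfrac{\eta}{2}\lambda_j(\eta)$ holds exactly along the path, so a uniform bound on $\partial_\eta\lambda$ over a compact $[0,\eta_0]\times K$ already gives the $O(\eta^2)$ remainder.
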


The KKT stationarity equation for $(\mathrm P_\infty)$ shows that the
limiting multiplier is affine in $w_{-i}$ when $G$ has full row rank.
Omitting the remainder terms gives
the useful first-order candidates
$\|\lambda^\star\|_\infty/(2\epsilon_i)$ and
$\|\lambda^\star\|_2/(2\sqrt{\epsilon_i})$, but these candidates alone are
not finite certificates unless the remainder is zero, bounded explicitly,
or checked by resolving the WS problem. Example~2 below is a special
closed-form case in which the candidate is exact.

\subsection{Robustness Across Decision Instances}
\label{sec:theory:robust}

A receding-horizon controller does not solve one convex problem but a new
one each tick, so the deployable object is a weight valid on a whole set of
them.

\begin{proposition}[Robust equivalence region]\label{prop:robust}
Let inner problems $q=1,\dots,K$ each satisfy the assumptions of
Theorem~\ref{thm:main}, with lexicographic points $p^\star_q$, and define the
\emph{robust region}
\begin{equation}\label{eq:robust-region}
  \Omega_{\mathrm{rob}}:=\bigcap_{q=1}^{K}\Omega(p^\star_q).
\end{equation}
If $w\in\Omega_{\mathrm{rob}}$, then for every $q$ the lexicographic optimum of
problem $q$ minimises its weighted-sum objective at weight $w$. Moreover,
$\Omega_{\mathrm{rob}}$ is a polyhedron whenever each $\Omega(p^\star_q)$ is ---
by Theorem~\ref{thm:poly} for piecewise-affine $J$ and by
Proposition~\ref{prop:convexJ} for convex differentiable $J$ --- and its
Chebyshev centre is then a single linear program once the half-space
descriptions are available.
\end{proposition}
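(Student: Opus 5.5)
The proof has three parts, each following directly from results already established. The first is the membership claim. Fix $w\in\Omega_{\mathrm{rob}}$ and an index $q$. By the definition of intersection, $w\in\Omega(p^\star_q)$. Problem $q$ satisfies (A1)--(A3), so the forward direction of Theorem~\ref{thm:main} applies and shows that $z^\star_{\lex,q}$ minimises $J_q+\sum_i w_iV_{i,q}$ over $\mathcal Z_q$. Since $q$ was arbitrary, this settles the first assertion. No uniqueness is claimed, so the second half of Theorem~\ref{thm:main} is not needed.

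The second part is polyhedrality. A polyhedron here means a finite intersection of closed or open half-spaces in $\mathbb R^L$. The paper's regions carry the open constraint $w\in\mathbb R^L_{++}$, so I will read ``polyhedral'' in that sense: a set $\{w:Cw\le d,\ Ew=f\}$ intersected with the open orthant. Each $\Omega(p^\star_q)$ has this form:
\begin{itemize}
\item under Setting~(P) by \eqref{eq:halfspace2} and Theorem~\ref{thm:poly};
\item under (P$'$) by Proposition~\ref{prop:convexJ}, as the projection of the scaled-KKT polyhedron, which is again a finite system $C_qw\le d_q$, $E_qw=f_q$ once the auxiliary variables are eliminated by Fourier--Motzkin.
\end{itemize}
Stacking the $K$ systems gives $\Omega_{\mathrm{rob}}=\{w\in\mathbb R^L_{++}:Cw\le d,\ Ew=f\}$ with $C=[C_1;\dots;C_K]$, and similarly for $d$, $E$ and $f$. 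A finite intersection of polyhedra is therefore a polyhedron.

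The third part is the Chebyshev centre. Given the stacked description, I will write the largest-inscribed-ball problem in the standard form:
\[
\max_{w,r}\ r\quad\text{s.t.}\quad c_k^\top w+r\|c_k\|_2\le d_k\ \forall k,\quad Ew=f,\quad w_i\ge r\ \forall i,\quad r\ge0.
\]
The constraints $w_i\ge r$ are there to keep the ball inside the orthant. When $E$ is nonempty, the ball is understood relative to the affine hull $\{Ew=f\}$. For that case I will either note that the inner-product constraint gives the correct distance when $c_k$ is first projected onto $\ker E$, or simply restrict to the affine hull by parametrising $w=w_0+Nu$. Every constraint is linear in $(w,r)$, because the norms $\|c_k\|_2$ are constants, so the problem is a single LP.

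I expect the main obstacle to be the (P$'$) case. There the half-space description is given only implicitly, as a projection, so ``once the half-space descriptions are available'' should be read as a hypothesis, not as something the argument computes. If explicit elimination is unwanted, the LP can instead be posed in the lifted variables $(w,r,\beta_q,\lambda_q,\mu_q)_{q}$. Membership then remains linear, but an inscribed-ball margin in $w$ alone cannot be expressed without the projected facets. For that reason I will keep the hypothesis as stated.
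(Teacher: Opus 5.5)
Your proposal is correct and matches the paper's argument: the forward direction of Theorem~\ref{thm:main} applied to each instance separately, plus closure of polyhedra under finite intersection, which you make concrete by stacking the $K$ half-space systems. Your explicit Chebyshev LP is step~5 of Algorithm~\ref{alg:calib} without the operator box; without finite upper bounds it can be unbounded (in Example~1, $\Omega(p^\star)=\{w\ge(1,1)\}$ contains arbitrarily large balls), so the centre is well defined only when the region is bounded or intersected with a box.
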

\begin{proof}
The first claim is Theorem~\ref{thm:main} applied to each $q$ separately; the
second follows because polyhedra are closed under finite intersection.
\end{proof}

\begin{remark}
The guarantee is exact on the $K$ certified problems; extension to unseen
ticks is an empirical question, measured in
Section~\ref{sec:experiments:certificate}.
\end{remark}

\emph{Persistence interpretation.} Pointwise membership
$w\in\Omega(p^\star_q)$ certifies one decision instance; persistence over
MPC ticks requires a nonempty intersection of their regions and is
measured in Section~\ref{sec:experiments:certificate}.

\begin{remark}[Regularity and applicability]\label{rem:applicability}
\phantomsection\label{sec:theory:failure}%
Two conditions bound the guarantees, and both are testable. If augmented
LICQ fails at $z_{\lex}^\star$, the region may be lower-dimensional and
direct affine elimination is unavailable; this is certified by a
singular-value rank calculation on the augmented active-gradient matrix. If
$\mathcal Z$, any $g_{i,j}$, or $J$ is non-convex, the support-normal
characterization no longer provides a global WS guarantee; convexity of the
encoders is verified directly.
\end{remark}

\section{Construction, Certificate, and Verification}
\label{sec:construction}
Section~\ref{sec:equivalence_region} establishes constructive results. Here we present the calibration algorithm, its computational complexity, the online monitoring procedure, and the numerical verification.

\subsection{Constructive Calibration and Online Deployment}
\label{sec:method:algorithms}

We compute the equivalence region offline for a reference convex problem. When the intersection with the operator box is non-empty, we condense the region into a robust interior weight $w^\dagger$. Algorithm~\ref{alg:calib} accepts an operator weight box $[\underline{w}_i,\overline{w}_i]$, projects the scaled KKT system to obtain $(C,d,E,f)$, and returns the Chebyshev centre of $\Omega(p^\star)$ within the box, together with the inscribed-ball radius $r^\dagger$.

\begin{algorithm}[t]
\caption{Calibration: $\ell_1$ exact equivalence region}
\label{alg:calib}
\begin{algorithmic}[1]
\Require Priority-ordered problem with $\ell_1$ penalties; operator weight box $[\underline{w}_i,\overline{w}_i]$ with $\underline w_i>0$. \State \textbf{Lex solve.} Solve the cascade for $z_{\lex}^\star$ and $p^\star=\Phi(z_{\lex}^\star)$. \State \textbf{Classify hinges.} Form $\mathcal I_i^+$, $\mathcal I_i^0$, and the strictly satisfied set; evaluate $\nabla J(z_{\lex}^\star)$ and the active gradients.
\State \textbf{Scaled KKT assembly.} For every $(i,j)$ with $j\in\mathcal I_i^0$, introduce $\beta_{i,j}=w_i\alpha_{i,j}$ and impose $0\le\beta_{i,j}\le w_i$. Form~\eqref{eq:scaled-kkt} as an affine system in $(w,\beta,\lambda,\mu)$.
\State \textbf{Projection.} Project out $(\beta,\lambda,\mu)$ to obtain
$\Omega(p^\star)=\{w:Cw\le d,\ Ew=f,\ w\in\mathbb R_{++}^L\}$.
\State \textbf{Chebyshev centre.} Solve
$\max_{w,r}\,r$ subject to
$c_k^\top w+r\|c_k\|_2\le d_k$,
$Ew=f$,
$w_i-\underline{w}_i\ge r$,
$\overline{w}_i-w_i\ge r$, and $r\ge0$.
\State \textbf{Verify.} Solve the WS problem at $w^\dagger$ and verify
$\Phi(z_{\mathrm{ws}}^\star(w^\dagger))=p^\star$ to solver tolerance;
when the WS minimizer is unique, also verify
$z_{\mathrm{ws}}^\star(w^\dagger)=z_{\lex}^\star$ and the compliance-vector match.
\Ensure Weight $w^\dagger$; margin $r^\dagger$; description $(C,d,E,f)$; numerical verification record.
\end{algorithmic}
\end{algorithm}

A homogeneous-cone variant omits the operator box and normalizes $\widehat{\Omega}(p^\star)$ on a simplex slice. The construction appears in the supplementary material.

The homogeneous exact-equivalence construction for $\ell_2$ penalties fails when a binding constraint admits a positive limiting multiplier. Proposition~\ref{prop:l2tol} provides local asymptotic bounds and first-order weight scaling in this case. The $\ell_2$ procedure computes a first-order candidate from the limiting multipliers and solves the WS problem to verify the prescribed tolerance. The procedure yields a finite certificate only when the remainder constants in Proposition~\ref{prop:l2tol} are included. Otherwise, it constitutes a candidate-and-check method rather than an exact single-LP guarantee. If the $\ell_1$ box admits no positive-margin intersection, the routine returns a flagged candidate with $r^\dagger=0$. Such a return does not constitute an equivalence certificate. Box expansion, recalibration, or cascade fallback is then required.

During online deployment, the planner solves the WS problem~\eqref{eq:ws} at the cached weight for the current convex inner problem, forms the binary compliance vector, and compares it with the cached calibration pattern. This comparison serves as a computational proxy for certificate change; it does not constitute an exact membership test, which would require solving the current lexicographic reference and verifying its scaled-KKT system. The intersection of the current and calibration-instance regions, as characterized in Proposition~\ref{prop:robust}, can vanish when the active constraints or rule applicability change. Consequently, a proxy mismatch initiates cascade fallback.
Section~\ref{sec:experiments:certificate} measures certificate persistence on held-out instances, and Section~\ref{sec:calibrated:conditions} evaluates the proxy against exact cross-tick membership.

\emph{Complexity.}
General polyhedral projection is exponential in the number of eliminated variables. Membership of a fixed $w$ is checked by a polynomial-time linear feasibility problem in $(\beta,\lambda,\mu)$. Direct affine elimination applies under augmented LICQ: the boundary-hinge gradients together with the active hard-inequality and equality gradients in~\eqref{eq:scaled-kkt} are linearly independent. Under this condition, $(\beta,\lambda,\mu)$ is uniquely determined affinely by $w$. Substituting this expression into
$0\le\beta_{i,j}\le w_i$ and $\lambda\ge0$ yields $O(A)$ linear inequalities, where $A$ denotes the active-set size. Without augmented LICQ, the projection remains valid but may be lower-dimensional and may require a general projection method. Once an explicit half-space representation is available, the Chebyshev-center problem has $L+1$ decision variables.

\subsection{Worked Examples and Numerical Verification}
\label{subsec:verification}

Two worked examples substantiate the theory and the reference implementation. Figure~\ref{fig:certificate_geometry} draws the central object for Example~1.

\begin{figure}[t]
 \centering
 \includegraphics[width=\linewidth]{
 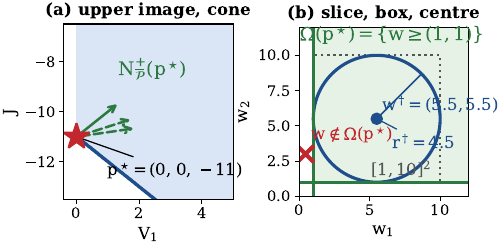}
 \caption{The certificate, drawn (Example~1, $L=2$). (a)~Cross-section of  the upper image at $V_2=0$: the lexicographic point $p^\star$ is a vertex,  and the weights that recover it are the normal directions there. (b)~The  unit-performance slice $\Omega(p^\star)=\{w\ge(1,1)\}$, the operator box  $[1,10]^2$, and the Chebyshev centre $w^\dagger=(5.5,5.5)$ with inscribed radius $r^\dagger=4.5$.}
 \label{fig:certificate_geometry}
\end{figure}

\emph{Example~1 ($\ell_1$ LP).}
Let $\mathcal Z=[0,10]^2$, $\mathcal C_1:z_1+z_2\le8$, $\mathcal C_2:z_1\le3$, and $J=-2z_1-z_2$. The cascade gives $z_{\lex}^\star=(3,5)$ and $p^\star=(0,0,-11)$. Both rule constraints are boundary-binding. The scaled coefficients satisfy $0\le\beta_i\le w_i$, and the stationarity system yields $\Omega(p^\star)=\{(w_1,w_2)\in\mathbb R_{++}^2:w_1\ge1,\,w_2\ge1\}$, with Chebyshev centre $w^\dagger=(5.5,5.5)$ and margin $r^\dagger=4.5$ on $[1,10]^2$.

\emph{Example~2 ($\ell_2$ QP).}
Let $x_{k+1}=x_k+u_k$, $J=(x_4-10)^2$, with three squared-hinge levels: safety $x_k\le3$, legal $u_k\le2$, and comfort $u_k\ge1.5$. The cascade gives $u_{\lex}^\star=(0.75,0.75,0.75,0.75)$ and $J^\star=49$. The
closed-form penalized minimizer lies on the branch of Theorem~\ref{thm:l2asymp}; the closed-form derivation, given in the supplementary material, shows that the $V_1\le0.01$ threshold at $w_3=1$
is exactly $w_1\ge77.5$ --- exact here because the remainder vanishes, a property special to this example.

\begin{figure}[t]
\centering
\includegraphics[width=\linewidth]{
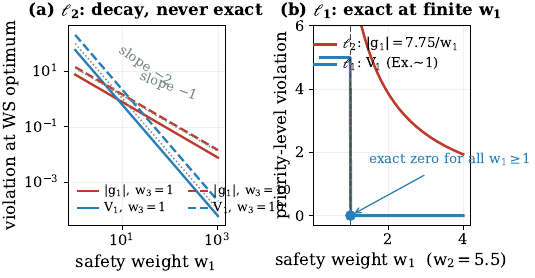}
\caption{The $\ell_1$/$\ell_2$ penalty dichotomy. \textbf{(a)} For Example~2, the raw $\ell_2$ boundary violation decays as $O(1/w_1)$ and its square as $O(1/w_1^2)$; neither reaches zero at finite weight. \textbf{(b)} In Example~1, the $\ell_1$ priority violation reaches exactly zero at the finite threshold $w_1=1$ and remains zero thereafter.}
\label{fig:l1l2_decay}
\end{figure}

A dynamics-agnostic reference implementation reproduces both examples to $10^{-6}$ and implements the scaled-subgradient formulation $\beta_{i,j}=w_i\alpha_{i,j}$. $10^{-6}$ and implements the scaled subgradient formulation $\beta_{i,j}=w_i\alpha_{i,j}$. Its randomized harness uses $500$ instances per objective family (linear $J$ and convex-quadratic $J$), random polyhedral $\mathcal Z$, and affine hinges. Region predictions are compared with brute-force weighted solves at the Chebyshev center, on both sides of each computed boundary, and at random box draws, using trajectory equality rather than only compliance-bit agreement. Across the $1000$ randomized instances, the scaled-KKT predictions agree with the direct-solve oracle at every decisive test point.

\section{Experimental Design}
\label{sec:deployment}

The closed-loop nuPlan study~\cite{caesar2021nuplan} has two layers: a dynamics-agnostic calibration layer that computes, from active rule-encoder gradients and hinge classifications at a lexicographic reference solution and using the scaled variables $\beta_{i,j}=w_i\alpha_{i,j}$, an $\ell_1$ equivalence region, a robust weight $w^\dagger$, and a cached compliance pattern offline; and a planning layer (kinematic-bicycle dynamics, sequential linearisation, rule encoders, cascade solver, planner-independent observer). Each sequentially linearised inner problem is a convex decision instance to which the support-normal guarantees apply when its assumptions and active-set certificate hold; with the convex quadratic tracking objective, Proposition~\ref{prop:convexJ} supplies the scaled-KKT weight-region description. The comparative campaign characterizes the heuristic surrogate at $w_{\mathrm{dep}}$ (Section~\ref{sec:system:params}); the calibrated study of Section~\ref{sec:experiments:calibrated} deploys the Chebyshev-calibrated weight under the monitor.

A single design decision governs the interpretation of all results. The study comprises two experiments: the comparative campaign (Section~\ref{sec:experiments}) and the calibrated study (Section~\ref{sec:experiments:calibrated}). Every reported contrast is computed within a single experiment.
Each experiment employs identical per-tick problem construction, seeds, and scenario instances, and maintains its own heuristic and cascade reference conditions. The two constructions differ in a single element. The comparative campaign derives the reachable velocity cap from the reference path. The calibrated study clamps the velocity cap to values attainable from the measured initial state. This modification removes spurious first-step infeasibility in cascade stage problems and ensures that stage failures fail closed. Cross-experiment comparisons are not reported. Each experiment's conclusions are internal contrasts under its own construction.

\subsection{The 25-Rule Rulebook and Its Partition}
\label{sec:system:rulebook}

The deployment implements twenty-five nuPlan-applicable traffic rules as described by Censi~\etal~\cite{censi2019}, with numeric priorities $0$--$10$ organized into three sequentially enforced tiers --- safety ($L_{\mathrm{S}}$), legal ($L_{\mathrm{L}}$), comfort ($L_{\mathrm{C}}$); $L=3$, with desired-speed tracking in $J$. The rules partition $13{+}9{+}3$ by planner action: the MPC-controlled class (thirteen rules, twelve encoder modules; full rulebook in the supplementary material) utilizes active convex encoders that emit per-tick affine constraints; the observer-only class (nine rules, including the $H_2$ sentinels \texttt{8r0}, \texttt{8r1} at $L_8$ and \texttt{2r2} at $L_{2}$) is monitored without an encoder and serves as a negative-control set; the stub class (three rules) reserves inactive capacity. Controller-side claims are restricted to this three-tier, thirteen-rule implementation.

\subsection{The Two-Level Pipeline}
\label{sec:system:pipeline}

A two-level pipeline separates global routing from per-tick control. The outer level maintains a reference corridor from the lane graph. The inner level performs per-tick optimal control at $10$ Hz over a $3.0$ second horizon with $N=30$ and $\Delta t=0.1$ s. A monolithic single-level problem spanning the entire route window exceeds the $100$ ms per-tick computational budget.

Each tick runs a sequential-linearisation (SLP) outer loop. The kinematic bicycle $\dot p=v\cos\psi,\ \dot q=v\sin\psi,\ \dot\psi=(v/\ell)\tan\delta$ is non-convex in the heading; the planner recovers convexity locally by linearising about the warm start $\tilde z^{(j)}$ at each iteration~$j$,
\begin{align}\label{eq:slp-iter}
 \tilde z^{(j+1)}&=\argmin_{z}\;J(z)+\sum_i w_i V_i(z)\\
 &\text{s.t.}\quad x_{k+1}=A_k^{(j)}x_k+B_k^{(j)}u_k+c_k^{(j)},\nonumber
\end{align}
re-solving until the residual $\rho^{(j)}=\max_k\|\tilde z_k^{(j+1)}-\tilde z_k^{(j)}\|_2$ falls below
$0.05$ m or the iteration cap is reached. The cap is $1$ in $C_1$ and $3$ in $C_2$. This is the single parameter distinguishing the two protocol conditions. The convex inner problem is
\begin{equation}\label{eq:lcp-ocp}
 \min_{\tilde x,\tilde u,T}\ J(\tilde x,\tilde u)+\sum_{i=1}^{L}w_i V_i(T_i)+\eta\sum_{i,j,k}T_{i,j,k}^{2},
\end{equation}
subject to the linearised dynamics, actuator and slew-rate bounds, and the per-tier epigraph constraints
\begin{equation}\label{eq:lcp-epigraph}
 a_{i,j,k}^{\top}\tilde x_k+b_{i,j,k}^{\top}\tilde u_k+e_{i,j,k}\,\mu_{i,j,k}\le T_{i,j,k},\quad T_{i,j,k}\ge0,
\end{equation}
where $(a_{i,j,k},b_{i,j,k},e_{i,j,k})$ is the affine coefficient emitted by the rule encoder at tier $i$, slot $j$, step $k$; the binary mask $\mu_{i,j,k}\in\{0,1\}$ gates inapplicable rules; and the Tikhonov parameter is $\eta=10^{-6}$. This value improves the reduced-Hessian conditioning of the interior-point solve. In the equivalence analysis, the term is absorbed into the differentiable convex performance term. Its gradient with respect to a boundary slack is zero at $T_{i,j,k}=0$. It can affect solutions with positive slack.

The planner operates in a dual weighted-sum and cascade mode. A single weighted-sum solve is performed online. The full cascade is available for the offline reference and as the runtime fallback. A runtime compliance check compares the realized compliance vector against the cached calibration pattern. A planner-independent observer monitors the deployment. The observer evaluates the complete $25$-rule rulebook at every tick using venue-agnostic predicates over the simulator state. For example, the observer computes footprint-overlap area for a collision rule. The observer aggregates the per-rule integrated violation $\int r_i(t)\,\mathrm{d}t$. The observer shares no encoder code with the planner and supplies the primary rule-violation record used by the campaign. The remaining engineering apparatus is recorded in the supplementary material.

\subsection{Independent Compliance Certification}
\label{sec:system:certification}

Because the observer shares authorship with the planner, a second, independently authored evaluator, Construe, is employed for external corroboration. This offline certificate engine implements twenty-seven rules from the same rulebook using independently developed interpretations and code, evaluates every trajectory without shared rule code, and is accessed read-only under a verified content manifest. The two rule sets are joined through an explicit taxonomy, excluding six identically named but semantically distinct identifiers. Certificate outputs are not used in the per-level violation vectors or dominance statistics. Cross-engine agreement is calibrated separately for each rule pair on golden logs; only pairs promoted by that calibration support agreement claims. As reported in Section~\ref{sec:campaign:order}, one pair---vehicle bounding-box overlap---is promoted to verified status, so the independent evaluator corroborates that result rather than the complete rulebook.

\subsection{Deployment Parameters and What the Campaign Ran}
\label{sec:system:params}

The experimental conditions (Table~\ref{tab:conditions}) utilize a common parameter set and differ only in penalty form ($\ell_1$ or $\ell_2$), runtime mode (weighted sum or full cascade), and SLP cap ($1$ or $3$). Two parameters enter the analysis directly --- the per-tier tolerance $\boldsymbol{\epsilon}=(10^{-4},4{\times}10^{-2},5{\times}10^{-1})$ and the heuristic weights below; the remaining actuator, tracking, and vehicle parameters are listed in the supplementary material. Each protocol cell is deterministic given its seed, and the analysis pipeline that processes the per-tick logs into reported quantities is provided as a reproducibility artifact.

The campaign utilized priority-scaled heuristic weights
$w_{\mathrm{dep}}=(1000,100,10)$ under $\ell_1$ and
$(10^4,10^3,100)$ under $\ell_2$. The $C_1$ to $C_4$ results characterize the surrogate at $w_{\mathrm{dep}}$ and not the Chebyshev-center method. The calibrated and guarded conditions of Section~\ref{sec:experiments:calibrated} are reported separately and use a weight calibrated on held-out logs. For each scenario class, the first-tick exemplar cascade is re-solved at tightened tolerance $10^{-8}$. The scaled-subgradient KKT feasibility system \eqref{eq:scaled-subgradient} to \eqref{eq:scaled-kkt} is assembled from the encoder gradients. The system has $184$ dimensions and up to $190$ hinges. Membership $w_{\mathrm{dep}}\in\Omega(p^\star)$ is tested by linear feasibility in $(\beta,\lambda,\mu)$. When many comfort hinges are violated at the lex optimum, exact trajectory equivalence is frequently unavailable, and tolerance-compliance becomes the relevant target. The post-hoc audit identifies this situation in twelve of the sixteen classes. Among the remaining four, $w_{\mathrm{dep}}$ lies in the computed region for one class. In two classes with broad non-empty regions, the audit gives a comfort-tier lower bound of $89$, while $w_{\mathrm{dep}}$ supplies $10$. The numerical audit values reported here are obtained from this scaled-variable feasibility system.

\subsection{Held-Out Certificate-Persistence Protocol}
\label{sec:experiments:persistence-protocol}

Certificate persistence is evaluated on three \texttt{nuPlan-mini} logs disjoint from the nine the closed-loop campaign draws on: convex inner problems are captured every $25$ controller ticks ($2.5$\,s), giving $84$
exemplars across $14$ scenario classes, each re-solved by the cascade at tightened tolerance ($10^{-8}$) with its equivalence region constructed from the scaled-KKT system~\eqref{eq:scaled-subgradient}--\eqref{eq:scaled-kkt}. An exemplar is \emph{eligible} when the boxed region has strictly positive inscribed
radius, genuine non-box facet margin, and relative stationarity residual at most $10^{-6}$; failure is classified as direct-elimination failure under augmented-LICQ deficiency (Remark~\ref{rem:auglicq}), infeasibility within the operator box, or failure of the lexicographic reference solve itself.

Three persistence metrics are reported: robust-intersection survival $S_k=\mathbf 1\bigl[\bigcap_{q=1}^{k}\Omega(p^\star_q)\neq\varnothing\bigr]$, whose largest $k$ with $S_k=1$ is the class's certificate \emph{lifetime}; the cross-tick membership rate
\begin{equation}\label{eq:cross-tick-rate}
 M_{\mathrm{cross}}
 =\frac{\sum_{q\neq r}\mathbf 1\bigl[w^\dagger_q\in\Omega(p^\star_r)\bigr]}
        {\#\{(q,r):q\neq r\}},
\end{equation}
with $w^\dagger_q$ the weight calibrated at exemplar $q$; and active-set churn, the fraction of a class's exemplars with distinct classified binding sets. To resolve the $2.5$\,s sampling interval, one held-out
episode is additionally captured at every $0.1$\,s controller tick; the dense study measures the number of subsequent ticks for which a certificate issued at tick $t$ remains valid.

\subsection{Protocol Design and Empirical Objects}
\label{sec:experiments:protocol}

The comparative campaign assesses five planner conditions across sixteen \texttt{nuPlan-mini} scenario types and five seeds, yielding $5\times5\times16=400$ closed-loop cells (scenario table in the supplementary material). All conditions (Table~\ref{tab:conditions}) employ the same pipeline, encoders, warm start, and hyperparameters; only the penalty form, runtime mode, and SLP cap vary. $C_0$ is the legacy flat-weight baseline; $C_1$ the single-solve $\ell_1$ research condition at the heuristic deployed weights; $C_2$ and $C_3$ the SLP-cap and penalty-form ablations; $C_4$ the full $L{+}1$-stage lex cascade per tick, the reference for comparison with $C_1$.

\begin{table}[t]
 \centering
 \caption{Planner conditions by experiment. Conditions within an experiment  share solver settings, seeds, and scenario instances, and every contrast is computed within one experiment. Weights: heur.\ =
 $w_{\mathrm{dep}}=(1000,100,10)$; cal.\ = frozen held-out cache
 (Table~\ref{tab:calibration_record}).}
 \label{tab:conditions}
 \scriptsize
 \setlength{\tabcolsep}{3pt}
 \begin{tabular}{lllrl}
 \toprule
 Code & Solve & Weights & Cells & Role \\
 \midrule
 \multicolumn{5}{@{}l}{\emph{Comparative campaign} (Section~\ref{sec:experiments})}\\
 $C_0$ & flat single-tier & flat & $16{\times}5$ & legacy baseline \\
 $C_1$ & WS-$\ell_1$, SLP-1 & heur. & $16{\times}5$ & heuristic single solve \\
 $C_2$ & WS-$\ell_1$, SLP-3 & heur. & $16{\times}5$ & $H_3$ ablation \\
 $C_3$ & WS-$\ell_2$, SLP-1 & heur. & $16{\times}5$ & $H_4$ ablation \\
 $C_4$ & cascade-$\ell_1$ & --- & $16{\times}5$ & cascade reference \\
 \midrule
 \multicolumn{5}{@{}l}{\emph{Calibrated study} (Section~\ref{sec:experiments:calibrated})}\\
 $C_H$ & WS-$\ell_1$, SLP-1 & heur. & $11{\times}5$ & heuristic reference \\
 $C_5$ & WS-$\ell_1$, SLP-1 & cal. & $11{\times}5$ & calibrated single solve \\
 $C_6$ & WS-$\ell_1$ + monitor & cal. & $11{\times}5$ & guarded (cascade fallback) \\
 $C_X$ & cascade-$\ell_1$ & --- & $11{\times}5$ & cascade reference \\
 \midrule
 \multicolumn{5}{@{}l}{\emph{External baseline} (supplementary material)}\\
 IDM & external planner & --- & $16{\times}5$ & external comparison \\
 \bottomrule
 \end{tabular}
\end{table}

Three metrics derived from observer evaluations form the basis of the analysis. The \emph{per-level integrated violation} at numeric priority level $\ell$ on a cell $(c,\sigma,s)$ is \begin{equation}\label{eq:V-level} V_{\mathrm{MPC},\ell}^{(c,\sigma,s)}:=\sum_{i\in\mathcal{R}_{\ell}^{\mathrm{MPC}}}\sum_{t}r_{i,t}^{(c,\sigma,s)}\,\Delta t,
\end{equation}
where $\mathcal{R}_{\ell}^{\mathrm{MPC}}$ is the set of MPC-controlled rule IDs at level $\ell$, $r_{i,t}\in[0,1]$ is the observer's per-tick violation rate, and $\Delta t=0.1$\,s; a planner that respects the hierarchy concentrates $V_{\mathrm{MPC},\ell}$ at the low-priority levels and holds it near zero at the high ones.

\begin{definition}[Lex-Pareto dominance]\label{def:lex-pareto}
 Condition $c_M$ lex-Pareto-dominates $c_B$ on scenario $s$, written  $c_M\succ_{\lex}c_B$, when the per-level violation vectors  $\mathbf{V}^{(c,\cdot,s)}=(V_{\mathrm{MPC},L}^{(c,\cdot,s)},\ldots,V_{\mathrm{MPC},0}^{(c,\cdot,s)})$,  ordered highest to lowest priority and evaluated on the per-seed median,  satisfy lexicographic precedence within a per-level tolerance  $\boldsymbol{\tau}>0$: at the highest level $\ell^\dagger$ at which the two  differ by more than $\tau_{\ell^\dagger}$,  $V_{\mathrm{MPC},\ell^\dagger}^{(c_M,\cdot,s)}<V_{\mathrm{MPC},\ell^\dagger}^{(c_B,\cdot,s)}-\tau_{\ell^\dagger}$,  with the two within tolerance at every strictly higher level.
\end{definition}

This hierarchical comparison does not necessitate improvement at every level; the cascade may allocate lower-priority resources to enhance higher-priority levels. The dominance count represents the per-condition lex-win rate. Fidelity of the research condition to the cascade is quantified by the $C_1$-vs-$C_4$ \emph{per-tick compliance match}
\begin{equation}\label{eq:compliance-match}
 M_\ell^{(s)}:=\operatorname{median}_\sigma\frac{1}{T_s}\sum_{t=1}^{T_s}\mathbb{1}\bigl[b_{\epsilon,\ell,t}^{(C_1,\sigma,s)}=b_{\epsilon,\ell,t}^{(C_4,\sigma,s)}\bigr],
\end{equation}
where $b_{\epsilon,\ell,t}=\mathbb{1}[v_{\ell,t}\le\epsilon_\ell]$ is the per-tick compliance bit and the per-tick level sum $v_{\ell,t}$ runs over the MPC-controlled rules of the level, or, for the observer-only sentinel levels $\ell\in\{8,2\}$, over $\mathcal{R}_8^{\mathrm{obs}}=\{\texttt{8r0},\texttt{8r1}\}$ and $\mathcal{R}_2^{\mathrm{obs}}=\{\texttt{2r2}\}$. A value $M_\ell^{(s)}=1$ means that $C_1$ and $C_4$ have the same compliance bit at level $\ell$ on every evaluated tick of scenario $s$. Because violations are sparse relative to episode length, tick-level agreement can saturate; the event-level analysis below is therefore treated as the more sensitive fidelity assessment for sparse rule violations.

The four planned hypotheses are: $H_1$, the cascade $C_4$ lex-Pareto-dominates the legacy baseline $C_0$ on a majority of scenarios, by an exact one-sided binomial test with rejection threshold $k\ge12$ at $n=16$, $\alpha=0.05$; $H_2$, the research condition $C_1$ matches the cascade's per-tick compliance at the collision levels $L_{10},L_9$ and the observer-only sentinels $L_8,L_{2}$ against the floor $M_\ell^{(s)}\ge0.95$; $H_3$, raising the SLP cap from one to three improves the research condition's behaviour; and $H_4$, the $\ell_2$ penalty matches $\ell_1$ at the campaign tolerance. Per-level percent reductions carry $95\%$ BCa bootstrap intervals ($10^4$ resamples) across the sixteen scenarios; all statistics span the complete five-seed campaign. The binomial model is a fixed-benchmark sign test with exchangeable paired comparisons; at one-sided $\alpha=0.05$ the exact rejection threshold is $k\ge12$ ($\Pr(X\ge12\mid p{=}\tfrac12)\approx0.0384$).

\section{Calibration, Certificate Lifetime, and Closed-Loop Evaluation}
\label{sec:experiments:calibrated}

\subsection{Calibration Coverage and Certificate Lifetime}
\label{sec:experiments:certificate}

\emph{Coverage.} Under the protocol of Section~\ref{sec:experiments:persistence-protocol}, eleven of fourteen examined scenario classes contain an eligible held-out calibration exemplar --- a positive-margin certificate with relative stationarity residual at most $2.2\times10^{-7}$ (Table~\ref{tab:calibration_record}).
Three failures arise from distinct mechanisms. One exemplar, near\_multiple\_vehicles, violates augmented LICQ as defined in Remark~\ref{rem:auglicq}; consequently, the direct-elimination implementation withholds certification for the possibly lower-dimensional region. Two exemplars, starting\_protected\_cross\_turn and traversing\_intersection, admit no feasible weight within the operator box $[1,10^8]^3$.
Later exemplars from these two episodes certify with large margin. Re-running the same construction on the campaign's own first-tick exemplars yields ten of sixteen, with two such failures and four classes at which the lexicographic reference itself fails closed. These four classes certify without incident on the held-out states.
Certifiability is a property of the decision instance rather than the scenario class. The residuals reported in Table~\ref{tab:calibration_record} for the three failures correspond to the box-center fallback that the protocol reports on failure, not to a failed solve. The supplementary material provides the full failure diagnostics: the singular-value rank test, conditioning, facet counts, and later-tick recovery.

\emph{Sensitivity.} Eligibility is stable under a four-decade sweep of the hinge-classification band and under operator boxes $[1,10^4]^3$, $[1,10^6]^3$, and $[1,10^8]^3$. The normalized weight direction of every jointly eligible class moves by at most $0.5^\circ$, with median $0^\circ$, across the four decades. The center magnitude scales with the box where the region is broad. The verified stationarity residual remains below $10^{-6}$ at every box. The supplementary material reports per-threshold and per-box counts.

The robust region defined in equation~\eqref{eq:robust-region} is empty for all fourteen classes. No single weight reproduces the cascade across the sampled ticks of any episode.
The median lifetime of the intersection, grown one exemplar at a time, is one sampled interval with range 0 to 3. A weight calibrated at one tick satisfies the stationarity system of a different tick in 85 of 219 cross-tick pairs.
The cross-tick validity is $M_{\mathrm{cross}}=0.39$. Across the fourteen classes, the association is consistent with active-set turnover as the mechanism. The twelve classes whose binding set differs at every exemplar have cross-tick validity at most $0.40$. The two classes whose binding set is stable are valid at every tick. The Mann--Whitney test yields $p=0.017$. The Spearman correlation between active-set churn and cross-tick validity is $\rho=-0.61$ with $p=0.020$ for $n=14$ classes. See Fig.~\ref{fig:certificate_lifetime}.

\emph{Dense capture.} Dense recapture of one episode of \texttt{following\_lane\_with\_slow\_lead} at the native $0.1$\,s rate sharpens the result. Of $144$ usable ticks ($150$ captured; six dropped where a cascade stage fails closed), $83$ admit a pointwise certificate, but the median certificate remains valid for \emph{zero} subsequent ticks (mean $0.16$, maximum $3$, i.e.\ $0.3$\,s); only $9$ of $82$ adjacent-tick certificates remain valid, a rate of $0.11$. No certificate survives five ticks. The classified active set changes between $140$ of $143$ consecutive captures. The robust intersection over the dense episode is empty. The $2.5$\,s study's report of loss within one interval is not an artifact of coarse sampling. The dense result concerns one episode of one class. The remaining classes are resolved only at $2.5$\,s intervals.

These measurements do not imply that robust intersections are universally empty. They show that certificates are empirically short-lived in the evaluated receding-horizon instances and that persistence is strongly associated with active-set stability. The $\ell_1$ theory remains unchanged: the certificate is exact where it is issued. The measurement delimits its extent. This mechanism explains the tickwise and event divergence measured in the comparative campaign in Section~\ref{sec:experiments} and motivates the guarded deployment evaluated next.

\begin{figure}[t]
 \centering
 \includegraphics[width=\linewidth]{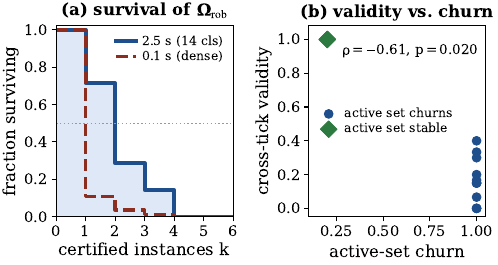}
 \caption{Certificate persistence on held-out logs. (a)~Survival of the robust-region intersection as additional decision instances are included:  the coarse multi-class study (solid) has median lifetime one $2.5$\,s  interval, while the dense $0.1$\,s recapture of one episode (dashed) has median lifetime zero subsequent ticks. (b)~Cross-tick certificate validity decreases with active-set churn ($\rho=-0.61$, $p=0.020$); the two classes with stable active sets remain valid across all sampled ticks.}
 \label{fig:certificate_lifetime}
\end{figure}

\subsection{Calibrated and Guarded Conditions}
\label{sec:calibrated:conditions}

The calibrated study evaluates four conditions on identical scenario--seed cells: a heuristic-weight single solve $C_H$ (weights $w_{\mathrm{dep}}$); the calibrated single solve $C_5$; the guarded condition $C_6$, which applies the runtime monitor of Section~\ref{sec:deployment} to the calibrated solve with cascade fallback; and the stage-wise lexicographic cascade $C_X$, solved to solver tolerance as the study reference. Solver conditioning under the calibrated weights, with components up to $10^8$, is unremarkable. Median IPOPT iteration counts are close between arms, with zero non-success returns in either. Median and tail quantiles are reported in the supplementary material. The monitor compares the realized compliance pattern against the cached calibration-state pattern. This is a heuristic proxy for certificate loss, not a per-tick membership certificate, which would require re-solving the cascade at the current instance.
Exact cross-tick KKT membership, as defined in Section~\ref{sec:experiments:certificate}, provides ground truth for evaluating this proxy. Pooled over the 148 evaluable pairs of the ten calibration-eligible classes with computable patterns, the monitor detects certificate lapse with sensitivity 0.55 and specificity 0.83. Predictive values are reported in the supplementary material. The miss rate is heterogeneous across classes, ranging from every lapse caught in changing\_lane\_to\_left, 16 of 16, to none in following\_lane\_with\_slow\_lead, 0 of 16. The monitor is therefore a selective fallback trigger, not an exact expiry detector, and the fallback-rate accounting below inherits this operating point. Both weighted arms are restricted to the eleven calibration-eligible classes, those with an eligible held-out exemplar. The certificate attaches to a decision instance, not to a class. Each arm is evaluated at 55 cells. Per-class weights, radii, residuals, and eligibility are reported in Table~\ref{tab:calibration_record}. The calibrated arms refuse the remainder rather than substitute untested weights.

\begin{table}[t]
\centering
\caption{Held-out calibration record: Chebyshev-centre weight $w^\dagger$
(box $[1,10^8]^3$), tier ratio $w_1{:}w_2{:}w_3$ (deployed heuristic:
$100{:}10{:}1$), inscribed radius $r^\dagger$, verified relative
stationarity residual, and eligibility per the pre-registered criteria
($r^\dagger>0$, genuine facet margins, residual $\le10^{-6}$). One
first-tick exemplar per class; two classes were not capturable on the
held-out logs (documented, with the protocol constants and the pinned
cache manifest, in the supplementary material).}
\label{tab:calibration_record}
\scriptsize
\setlength{\tabcolsep}{1.1pt}
\begin{tabular}{lrcrrl}
\toprule
Scenario class & $w^\dagger$ ($w_1$/$w_2$/$w_3$) & ratio & $r^\dagger$ & resid. & eligible \\
\midrule
changing\_lane & 9.99e7/6.65e4/6.65e4 & $1503{:}1{:}1$ & 6.65e4 & 1.9e-17 & yes \\
changing\_lane\_to\_left & 5e7/5e7/5e7 & $1{:}1{:}1$ & 5e7 & 6.1e-10 & yes \\
following\_lane\_with\_slow\_lead & 9.94e7/5.92e5/5.92e5 & $168{:}1{:}1$ & 5.92e5 & 5.8e-13 & yes \\
high\_magnitude\_speed & 5e7/5e7/5e7 & $1{:}1{:}1$ & 5e7 & 8.1e-9 & yes \\
medium\_magnitude\_speed & 316/156/156 & $2.0{:}1{:}1$ & 155 & 1.8e-7 & yes \\
near\_high\_speed\_vehicle & 5e7/5e7/5e7 & $1{:}1{:}1$ & 5e7 & 3.4e-8 & yes \\
near\_long\_vehicle & 316/156/156 & $2.0{:}1{:}1$ & 155 & 1.8e-7 & yes \\
near\_multiple\_vehicles & 5e7/5e7/5e7 & --- & 0 & 5.7e-2 & no$^{\mathrm{a}}$ \\
starting\_high\_speed\_turn & 779/370/370 & $2.1{:}1{:}1$ & 369 & 2.1e-7 & yes \\
starting\_left\_turn & 769/365/365 & $2.1{:}1{:}1$ & 364 & 2.2e-7 & yes \\
starting\_protected\_cross\_turn & 5e7/5e7/5e7 & --- & 0 & 1.2e-1 & no$^{\mathrm{a}}$ \\
starting\_right\_turn & 763/362/362 & $2.1{:}1{:}1$ & 361 & 2.2e-7 & yes \\
starting\_unprotected\_cross\_turn & 767/364/364 & $2.1{:}1{:}1$ & 363 & 2.2e-7 & yes \\
traversing\_intersection & 5e7/5e7/5e7 & --- & 0 & 7.4e-1 & no$^{\mathrm{a}}$ \\
\bottomrule
\multicolumn{6}{@{}l}{\rule{0pt}{8pt}$^{\mathrm{a}}$Infeasible centre; diagnosed in Section~\ref{sec:experiments:certificate}.}
\end{tabular}
\end{table}

All contrasts in this study are computed among its four conditions, which share solver settings, seeds, and scenario instances.

The calibration record, read for its tier ratios in Table~\ref{tab:calibration_record}, shows that the folklore separation $w_1\gg w_2\gg\cdots\gg w_L$ does not hold. Nine of eleven eligible classes have $1:1:1$ or approximately $2:1:1$ centers, compared to the deployed heuristic's $100:10:1$. The remaining two classes are more separated than the heuristic, with $168:1:1$ and $1503:1:1$. Near-equality is the typical geometry, but not a universal one. The ratios remain meaningful even where the region is broad. The box-sensitivity sweep of Section~\ref{sec:experiments:certificate} moves the normalized weight direction of every jointly eligible class by at most $0.5^\circ$ across four decades of box. The post-hoc audit of Section~\ref{sec:system:params} locates the heuristic's error, which runs opposite to folklore's intuition. In the two campaign classes whose broad regions actively constrain the weights, the region imposes a comfort-tier lower bound of approximately $89$, while $w_{\mathrm{dep}}$ supplies $10$. The heuristic gets the ordering correct and the magnitudes incorrect, in the direction that breaks equivalence. Priority is enforced by membership in $\Omega(p^\star)$, not by magnitude separation.

\subsection{Closed-Loop Dominance and Computational Cost}
\label{sec:calibrated:dominance}

The calibrated single solve is directional but not significant against either reference. For $C_5$ versus $C_H$, the result is 5 wins, 2 ties, and 4 losses. For $C_5$ versus $C_X$, the result is 8 wins, 0 ties, and 3 losses, with sign $p = 0.23$. The guarded condition lexicographically Pareto-dominates the cascade reference on nine of the eleven scenarios. For $C_6$ versus $C_X$, the result is 9 wins, 1 tie, and 1 loss. The exact two-sided sign test with the tie excluded yields $2 Pr[Bin(10, 1/2) >= 9] = 0.0215$. The pre-registered guarded hypothesis concerned the fallback rate, and this is one of three dominance comparisons reported here. The Bonferroni alpha is 0.05 divided by 3, approximately $0.017$. We therefore label this result exploratory: a strong hypothesis-generating observation, not a confirmatory finding.

The observation is mechanically coherent rather than an artifact of the dominance tolerances. The verdicts are unchanged when every per-level tolerance is scaled by 10 and sharpen when scaled by 0.1.
At least one $L_9$ win occurred at a per-scenario fallback rate of $0.00$, produced purely by accepted weighted solves. The effect is therefore not confined to episodes that effectively ran the cascade.
Per-scenario deciding levels, tolerance-scaling verdicts, and the full fallback-rate range are reported in the supplementary material. Per-seed verdicts agree with the seed-median verdict on 35 of 53 comparable seed pairs. This reflects seed variability that the exploratory label already acknowledges. A plausible mechanism, consistent with Section~\ref{sec:experiments:certificate}, is that exact stage-wise optimization at each tick is greedy over the episode and carries no closed-loop optimality guarantee.
A monitored surrogate that accepts certified weighted solves and falls back selectively can redistribute violation more favorably over the horizon. Establishing this causally is future work.

In cost, $C_5$ issues one weighted solve per tick, with median 1.14 s against the cascade's $10.85$ s. Both are unoptimized research prototypes, so the comparison is a relative scaling, not a real-time claim. $C_6$ invokes the hierarchy on $0.35$ of ticks, for a median 6.10 s per tick. The fallback rate exceeds the $0.20$ ceiling declared before the campaign. This is consistent qualitatively with the short certificate lifetime of Section~\ref{sec:experiments:certificate} and the monitor's imperfect operating point. Detailed fallback-rate accounting is provided in the supplementary material. The guard is a proof-of-concept selective-acceleration architecture, retaining about half of the single-solve timing advantage rather than all of it.

\subsection{Event-Level Behavioural Fidelity}
\label{sec:calibrated:events}

Calibration yields its strongest effect in the event record. Scoring all conditions on the same eleven classes and 55 paired cells with the study's cascade reference in Table~\ref{tab:calibrated_events}, calibration doubles $L_7$ event precision over the heuristic weight, from 0.232 to 0.463, lowers the $L_7$ false-negative rate from 0.72 to 0.56, and more than doubles $L_9$ precision, from 0.188 to 0.438. Both weighted arms produce fewer $L_9$ episodes than the cascade reference, 16 to 17 against 28. The calibrated gain at the collision level lies in the share of episodes matching the cascade's record. Event matching remains limited, with $R_{L_9} = 0.25$. Episode-count alignment should not be read as event-identity alignment. A scenario-cluster bootstrap with $10^4$ resamples over the eleven clusters supports the $L_7$ gain inferentially: $\Delta P_{L_7} = +0.231$, $95$ percent confidence interval $[+0.02, +0.48]$, and $\Delta R_{L_7} = +0.158$, confidence interval $[+0.02, +0.36]$, both excluding zero. The $L_9$ improvements are directional only, with Delta $P_{L_9} = +0.250$, confidence interval $[-0.22, +0.48]$. Collision-level events are too sparse on eleven scenarios for cluster-level inference.

Two limits bound the claim. Against the heuristic reference $C_H$, the calibrated advantage on the dominance count is not significant, with 5 wins, 2 ties, and 4 losses. On this benchmark, calibration is not established as an improvement over a well-tuned heuristic on that endpoint. The fallback rate exceeds the 0.20 ceiling declared before the campaign, consistent with the pointwise certificate lifetime measured in Section~\ref{sec:experiments:certificate}. The two campaigns' per-tick problems differ in the cascade velocity-cap construction. Contrasts are therefore computed within a campaign only, and the calibrated arms carry their own matched references.

\section{Validation and Heuristic Surrogate}
\label{sec:experiments}
The comparative campaign validates the apparatus: the reference cascade
realises the priority order it was written to enforce, and a heuristic
weight outside the equivalence region diverges from it exactly where the
certificate predicts.

\subsection{The cascade realises the order}
\label{sec:campaign:order}

Across the sixteen scenarios and five seeds, the cascade $C_4$
lex-Pareto-dominates the legacy baseline $C_0$ on $\mathbf{12}$ of
$\mathbf{16}$ scenarios; the exact one-sided binomial $p$-value at $k\ge12$
is $0.0384<\alpha=0.05$, with $95\%$ interval $[0.48,0.93]$ for the
benchmark win rate (Table~\ref{tab:deciding}; the dominance grid is in the
supplementary material). The research condition $C_1$
wins on $11$ of $16$ ($p=0.105$), a numerical majority one scenario short
of the threshold. The deciding level makes this a \emph{safety-level}
separation: seven of the twelve wins are decided at the vehicle-overlap
level $L_9$, three at $L_7$, and two at the headway level $L_3$. Resolved
per scenario (Fig.~\ref{fig:l9_scenarios}), the legacy baseline carries
non-zero seed-median $L_9$ overlap on seven of sixteen scenarios (medians
up to $9.09\,\mathrm{m^2\!\cdot\!s}$); the cascade drives six of the seven
to zero and sharply reduces the seventh ($1.245\!\to\!0.074$). Underneath, the
per-level decomposition (figure in the supplementary material) shows that,
under these benchmark conditions, the cascade achieved zero integrated
VRU-level ($L_{10}$) violation across all sixteen tested scenarios; $C_1$
did likewise. (This result is scoped to the specific reactive-traffic
benchmark conditions and does not constitute a claim of VRU collision
avoidance in real-world deployment.) The cascade keeps $L_9$ at or near
zero (a small nonzero seed-median on only two of sixteen scenarios), while
$C_1$ incurs nonzero $L_9$ on several scenarios. The cost surfaces at the
lowest-priority comfort level, where the median regression is $595.7\%$
--- the redistribution the lexicographic order prescribes.

The same structure appears without appealing to the pooled medians. The
per-scenario percent-reduction grid (Fig.~\ref{fig:snapshots_cross})
shows a left-to-right green-to-red gradient on a majority of scenario rows
--- the priority-respecting compromise visible per scenario rather than
only in aggregate --- and the level-major aggregate carries the same ordering,
$C_4\prec C_1\approx C_2\prec C_3\prec C_0$ at every level above $L_0$ and
reversing at $L_0$. At the vehicle-overlap level the aggregate median is
zero for $C_0$ and $C_4$, effectively zero ($2.6{\times}10^{-5}$) for
$C_2$, and only small-nonzero for $C_1$ ($0.0024$) and $C_3$ ($0.0590$).

The safety-level separation is also visible through the RSS proxy --- the
per-tick conjunction of the four collision-related observer rules. The
cascade attains the highest compliance rate, $0.8988$ against the
baseline's $0.8794$ ($+1.9$ percentage points), with the research
conditions statistically indistinguishable from the baseline. Per scenario
(Fig.~\ref{fig:rss_per_scenario}), $C_4$ attains the highest proxy
rate on $8$ of $16$ scenarios --- no research condition is highest on any
scenario in isolation --- and holds above $0.85$ on every scenario where
another condition falls below it. The residual non-compliant ticks
(minimum per-cell rates $0.195$--$0.322$) correspond to scenarios in which
nuPlan's reactive agents force the ego into a brief collision-rule
violation.

A crossed random-effects magnitude model
over all cells is consistent with the sign test --- it shows a statistically
distinguishable legal-level reduction and comfort-level increases, while the
safety-level effects point in the reducing direction with intervals
including zero; the framework-applicability
scans, the observer-only negative controls, and the smoothness ordering are
reported in the supplementary material. The independently authored certificate engine
corroborates the headline effect on its one calibrated rule pair --- vehicle
footprint overlap, $55/55$ events corroborated, zero disagreements --- and
independently reproduces the direction of the $L_9$ improvement. Throughout
this study, overlap is a geometric technical-evidence quantity computed over
bounding-box polygons under reactive simulated traffic; it is not a collision
determination, does not reflect fault attribution, is not indicative of
real-world collision frequency, and is scoped to this specific
\texttt{nuPlan-mini} benchmark rather than production deployment.

\begin{figure*}[t]
 \centering
 \includegraphics[width=0.9\textwidth]{
 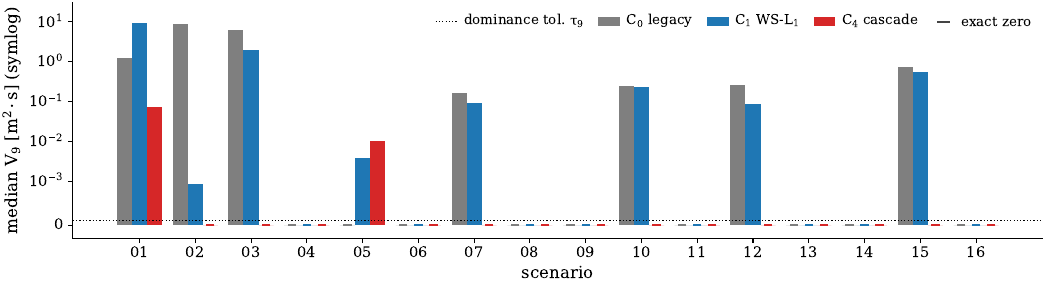}
 \caption{Deployment outcome at the vehicle-overlap level: per-scenario seed-median bounding-box overlap $V_9$ (symlog scale; baseline ticks denote exact zeros). The baseline $C_0$ (grey) incurs overlap on seven scenarios; the cascade $C_4$ (red) drives six to zero and sharply reduces the seventh (scenario $01$, $1.245\!\to\!0.074$), introducing only a small overlap on scenario $05$; the research condition $C_1$ (blue) tracks the baseline and exceeds it on scenario $01$ ($9.52$ vs $1.245$). The dotted line is the dominance tolerance $\tau_9$.}
 \label{fig:l9_scenarios}
\end{figure*}



\begin{table*}[t]
\centering

\begin{minipage}[t]{0.60\textwidth}
\vspace{0pt}
\centering

\includegraphics[width=\linewidth]{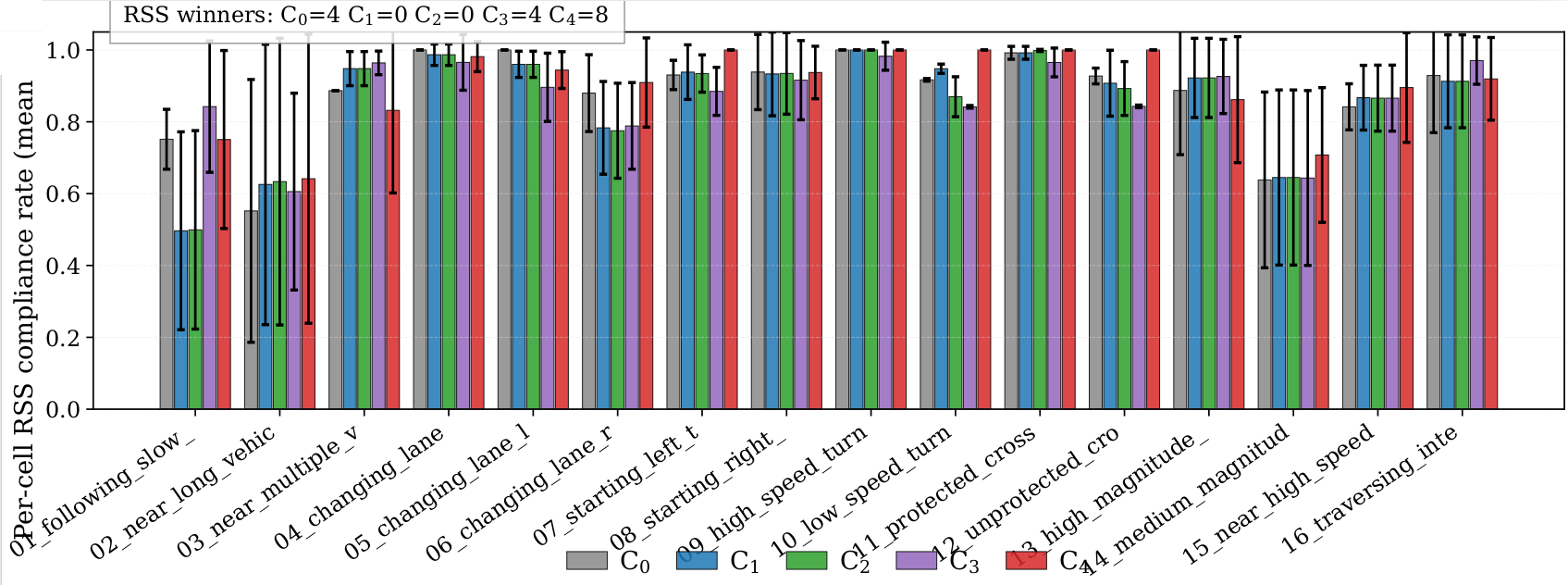}
\captionof{figure}{Deployment outcome on the RSS proxy: per-scenario compliance by condition (error bars: SD across $5$ seeds). $C_4$ wins on $8$ of $16$ scenarios, $C_3$ and $C_0$ on $4$ each, and $C_4$ holds above $0.85$ wherever another condition falls below it.}
 \label{fig:rss_per_scenario}

\end{minipage}
\hfill
\begin{minipage}[t]{0.35\textwidth}
\vspace{0pt}
\centering

\captionof{table}{Lex-Pareto dominance vs the legacy baseline with the exact one-sided binomial $p$ (wins of $n{=}16$), and the priority level at which each win is decided under the per-level tolerances. The cascade's majority is decided predominantly at the vehicle-overlap level $L_9$, not at the comfort levels.}
    \label{tab:deciding}
\footnotesize
\setlength{\tabcolsep}{1pt}
\begin{tabular}{lrrrrl}
\toprule
vs $C_0$ & W & T & L & $p$ & deciding level of wins (count) \\
\midrule
$C_1$ & 11 & 1 & 4 & 0.1051 & $L_{9}$:\,6, $L_{7}$:\,5 \\
$C_2$ & 5 & 2 & 9 & 0.9616 & $L_{9}$:\,3, $L_{7}$:\,2 \\
$C_3$ & 6 & 2 & 8 & 0.8949 & $L_{9}$:\,4, $L_{7}$:\,2 \\
$C_4$ & 12 & 0 & 4 & 0.0384 & $L_{9}$:\,7, $L_{7}$:\,3, $L_{3}$:\,2 \\
\bottomrule
\end{tabular}

\end{minipage}

\end{table*}

\subsection{The heuristic surrogate diverges where the certificate predicts}
\label{sec:campaign:surrogate}

The heuristic weight $w_{\mathrm{dep}}$ lies outside the computed
equivalence region in nearly every scenario class
(Section~\ref{sec:system:params}), and the certificate lifetime of
Section~\ref{sec:experiments:certificate} is one sampling interval --- so
the theory predicts tickwise proximity with event-level divergence, and
that is what the campaign measures. The five-seed $C_1$-vs-$C_4$ per-tick
compliance match has scenario medians of $1.000$ at $L_{10}$, $0.990$ at
$L_9$, and $1.000$ at the observer-only sentinels, yet fails the pre-stated
$0.95$ floor on subsets at $L_9$ and $L_8$; at event granularity the
surrogate produces more certificate overlap windows and more observer-side
$L_9$ episodes than the cascade. This event-level discrepancy and the
computational timing reported below are distinct empirical observations;
the study does not establish a causal relationship between them. Tickwise
and event-level findings differ because overlap events are sparse relative to episode length ---
sparse-event surrogate validation requires event precision and recall
alongside per-tick agreement.

The divergence has a stable per-scenario anatomy (single-condition
characterization figure in the supplementary material), and
Fig.~\ref{fig:snapshots_cross} grounds it in the scene: the per-rule
peak-violation record of the protected-cross episode under $C_1$, with
legal and sentinel rules firing around the crossing manoeuvre while
comfort rules absorb the sustained load. Under $C_1$, the top-violating
MPC-controlled rule is at $L_7$ in $11$ of $16$ scenarios (the
opposing-lane rule in $10$, the traffic-light rule in $1$), at $L_3$ in
$4$ (headway, speed limit, and lateral clearance), and at $L_0$ in $1$
(lateral comfort) --- matching scenario semantics: turn, intersection, and
lane-change scenarios are constrained by legal rules at $L_7$;
high-density following scenarios by headway and speed at $L_3$; one
sharp-turn scenario by lateral comfort at $L_0$. The per-rule,
per-scenario integrated violations exhibit a block structure --- turn and
intersection scenarios share a rule signature distinct from following
scenarios --- and this block structure is the empirical basis for keying
the calibration cache by scenario class. The stacked per-level breakdown
concentrates the violation mass at $L_7$ and $L_3$; the VRU-collision
level $L_{10}$ is zero in all scenarios, while the vehicle-overlap level
$L_9$ is nonzero in eight of sixteen, reaching
$9.52\,\mathrm{m^2\!\cdot\!s}$ in the slow-lead follow --- small relative
to the $L_7$/$L_3$ mass, but a genuine safety-level signal that precludes
characterizing the benchmark as collision-free.

\begin{figure*}[h!]
\centering
 \includegraphics[width=0.70\textwidth]{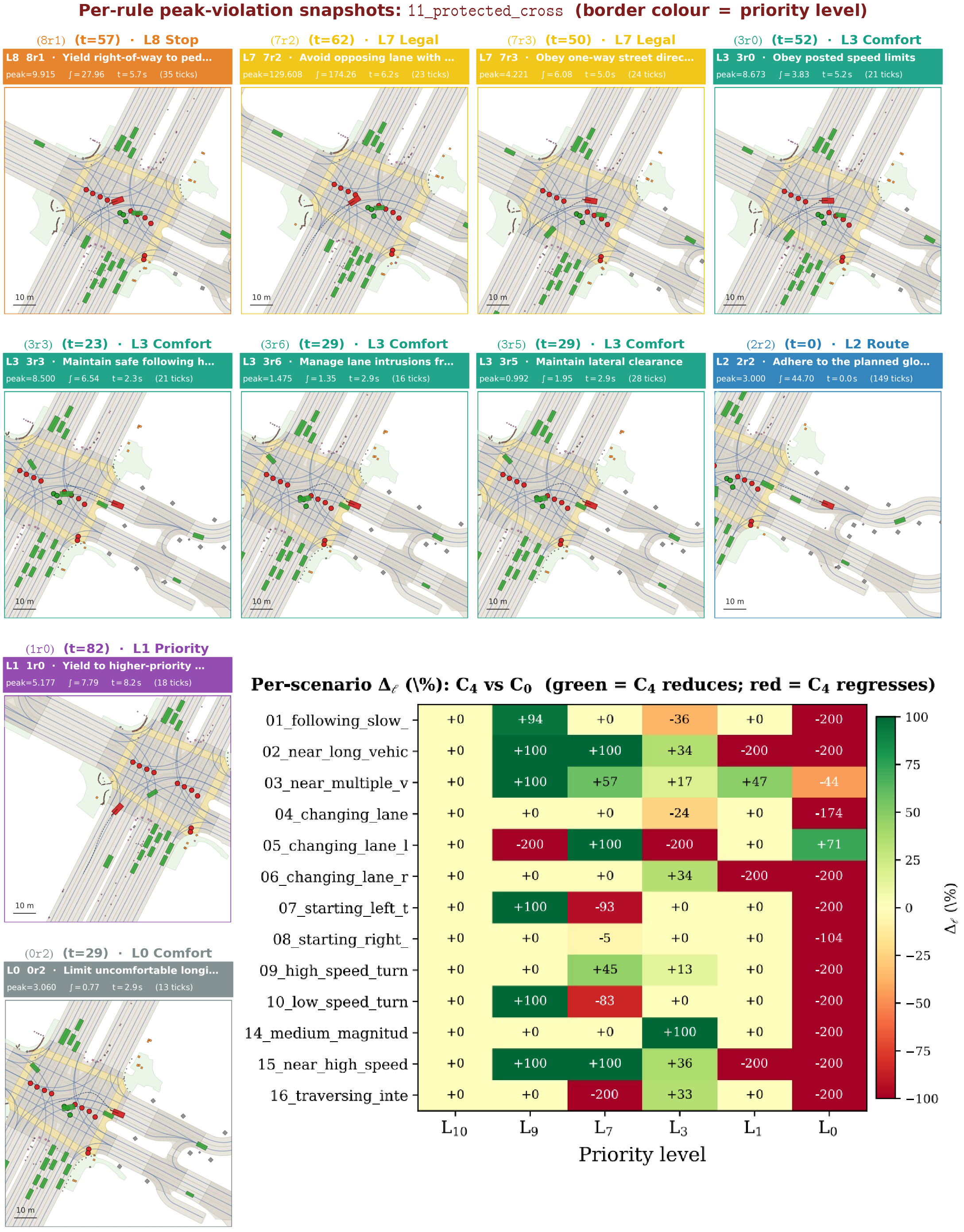}
 \caption{Closed-loop deployment in the scene: per-rule peak-violation
 snapshots for the protected-cross episode under the research condition
 $C_1$ (observer record; panel border color encodes the priority level).
 Each panel shows the intersection at one rule's peak-violation tick,
 annotated with the rule identifier, peak and integrated violation, peak
 time, and violated-tick count; the ego vehicle is red and surrounding agents
 are green. The lower-right panel reports the exact cascade's per-scenario,
 per-level percent reduction $\Delta_\ell$ of $C_4$ vs $C_0$ (rows: scenarios;
 columns: levels; green = reduction, red = regression). The left-to-right
 green-to-red gradient on a majority of rows is the per-scenario signature
 of priority-respecting compromise.}
 \label{fig:snapshots_cross}
\end{figure*}

In cost, the surrogate uses a median $16\%$ of the cascade wall time:
per-episode wall-time ratios span $1.76$--$9.39\times$ (median
$6.2\times$), with absolute per-tick times of the unoptimized research
prototypes at $0.68$--$2.92$\,s versus $5.08$--$10.27$\,s; per-scenario
times are tabulated in the supplementary material. Timing the cascade's
per-tick graph construction attributes a median $21\%$ of its cost to
construction, bounding the recoverable implementation speedup at
$1.27\times$; solver-convergence statistics are given in the supplementary
material.

\subsection{Closed-loop corroboration of the dichotomy}
\label{sec:campaign:dichotomy}

Both ablations contrast against the research condition $C_1$ on identical
scenario$\times$seed cells with paired statistics
(Table~\ref{tab:paired_contrasts}): per-level mean paired
differences with hierarchical-bootstrap $95\%$ intervals ($10^{4}$
replicates), an exact scenario-cluster sign-flip permutation test (all
$2^{16}$ assignments) with Westfall--Young max-$T$ adjustment across
levels, and the dominance predicate applied to the paired contrast. $H_3$
finds no effect: raising the SLP cap from one to three produces no
per-level difference distinguishable from zero after max-$T$ correction
(every adjusted $p\ge0.30$; paired dominance $4$W/$6$T/$6$L, sign
$p=0.754$), so the single-iteration setting loses nothing detectable.
$H_4$, by contrast, probes the practical consequence of the dichotomy at
deployment tolerances: switching the penalty to the squared hinge
significantly increases comfort-level violation ($L_0$:
$\bar{\Delta}=+0.58$, $95\%$ CI $[+0.20,+1.04]$, max-$T$ $p=0.003$; paired
dominance $3$W/$2$T/$11$L, sign $p=0.057$), while no safety-level
difference is statistically distinguishable from zero --- the practical
consequence Proposition~\ref{prop:l2tol} predicts, with the decay rates
themselves verified by the controlled sweep of
Fig.~\ref{fig:l1l2_decay}. A comparison against an external planner
(nuPlan's IDM), which argues for per-level evaluation rather than for the
equivalence theory, is reported in the supplementary material.

\begin{table*}[t]
\centering

\begin{minipage}[t]{0.59\textwidth}
\vspace{0pt}
\centering

\captionof{table}{Direct paired contrasts for the $H_3$/$H_4$ ablations
on identical scenario$\times$seed cells. Shown are the mean paired
difference in integrated violation (positive = ablation worse than
$C_1$), hierarchical-bootstrap 95\% CIs ($10^4$ replicates), exact
scenario-cluster sign-flip $p$-values, and Westfall--Young max-$T$
adjusted $p$-values. The bottom block reports paired seed-median
lex-dominance counts with exact two-sided sign tests.}
\label{tab:paired_contrasts}

\scriptsize
\setlength{\tabcolsep}{1pt}

\begin{tabular}{lrrrr@{\hspace{5pt}}rrrr}
\toprule
&
\multicolumn{4}{c}{$C_2-C_1$ (SLP-3, $H_3$)}
&
\multicolumn{4}{c}{$C_3-C_1$ ($\ell_2$, $H_4$)}
\\
\cmidrule(lr){2-5}
\cmidrule(lr){6-9}

Level
& $\bar{\Delta}$ & 95\% CI & $p$ & $p_{\max T}$
& $\bar{\Delta}$ & 95\% CI & $p$ & $p_{\max T}$ \\
\midrule

$L_{10}$
& +0.000 & [+0.000, +0.000] & 1.000 & 1.000
& +0.000 & [-0.015, +0.015] & 1.000 & 1.000 \\

$L_{9}$
& +0.136 & [-0.100, +0.529] & 0.744 & 1.000
& -0.089 & [-0.998, +0.735] & 0.602 & 1.000 \\

$L_{7}$
& +0.338 & [-1.211, +2.966] & 0.914 & 1.000
& -58.702 & [-131.587, +1.527] & 0.211 & 0.439 \\

$L_{3}$
& +0.082 & [-0.266, +0.523] & 0.637 & 1.000
& -6.884 & [-15.054, -1.619] & $<0.001$ & 0.154 \\

$L_{1}$
& +0.016 & [-0.002, +0.061] & 0.094 & 0.641
& +0.011 & [-0.027, +0.050] & 0.523 & 1.000 \\

$L_{0}$
& +0.056 & [-0.005, +0.162] & 0.103 & 0.307
& +0.577 & [+0.196, +1.035] & 0.002 & 0.003 \\

\midrule

Dominance vs.\ $C_1$
& \multicolumn{4}{c}{4W/6T/6L (sign $p=0.754$)}
& \multicolumn{4}{c}{3W/2T/11L (sign $p=0.057$)}
\\

\bottomrule
\end{tabular}

\end{minipage}
\hfill
\begin{minipage}[t]{0.39\textwidth}
\vspace{0pt}
\centering

\captionof{table}{Event-level behavioral fidelity to the cascade reference
on eleven calibration-eligible classes ($55$ paired cells per arm).
Episodes are debounced at $10$\,Hz and matched one-to-one at temporal
IoU $\geq0.5$; $P$ and $R$
measure fidelity to the cascade reference, not safety or quality.}
\label{tab:calibrated_events}

\footnotesize
\setlength{\tabcolsep}{1pt}

\begin{tabular}{llrrrr}
\toprule
Level & Condition & \#Ep. & $P$ & $R$ & FNR \\
\midrule

$L_9$
& $C_X$ cascade (ref.) & 28 & --- & --- & --- \\

&
$C_H$ heuristic & 16 & 0.188 & 0.107 & 0.893 \\

&
$C_5$ calibrated & 16 & \textbf{0.438} & \textbf{0.250} & \textbf{0.750} \\

&
$C_6$ guarded & 17 & 0.412 & 0.250 & 0.750 \\

\midrule

$L_7$
& $C_X$ cascade (ref.) & 57 & --- & --- & --- \\

&
$C_H$ heuristic & 69 & 0.232 & 0.281 & 0.719 \\

&
$C_5$ calibrated & 54 & \textbf{0.463} & \textbf{0.439} & \textbf{0.561} \\

&
$C_6$ guarded & 56 & 0.339 & 0.333 & 0.667 \\

\bottomrule
\end{tabular}

\end{minipage}

\end{table*}


\section{Discussion and Conclusion}
\label{sec:discussion}
The experiments expose two sources of weighted-sum/cascade divergence. First, different warm starts produce different locally convexified subproblems. Second, and more consequentially, equivalent-weight certification is decision-instance-specific: the sampled robust intersections are empty in all fourteen classes examined, native-rate lifetime is typically shorter than one tick, and persistence decreases
with active-set turnover. Consequently, a fixed weight may be exact at the calibration instance and invalid at the subsequent instance. Per-tick agreement saturates while the event record diverges; the compliance-pattern monitor provides an inexpensive but imperfect fallback signal. The calibrated study of Section~\ref{sec:experiments:calibrated} substantiates this: on the calibration-eligible classes, geometry-calibrated weights approximately double legal-level event precision relative to the heuristic weight at single-solve cost. The collision-level gain is directional with an interval containing zero. In an exploratory comparison, the guarded variant lexicographically Pareto-dominates the per-tick cascade. Per-tick exactness does not imply a closed-loop guarantee.

\subsection{Scope, Relation to RSS, and Future Work}
\label{sec:discussion:limits}
\phantomsection\label{sec:discussion:rss}%

The guarantees hold only within their stated scope: the global support-normal result requires convexity and existence; the explicit $\ell_1$ construction uses the scaled variables $\beta_{i,j}=w_i\alpha_{i,j}$, with augmented LICQ needed only for direct affine elimination and not for the general polyhedral projection; and the locality of the $\ell_2$ branch, together with the status of its first-order tolerance formula, is stated with Theorem~\ref{thm:l2asymp} and Proposition~\ref{prop:l2tol}.

Each sequentially linearised subproblem is a convex decision instance to which the certificate applies when those assumptions hold. Empirically, however, the study uses one vehicle model, one simulator, one instance of each of sixteen scenario types, and five reactive-traffic seeds, so the sign test is restricted to this fixed benchmark; the calibrated conditions cover eleven calibration-eligible classes rather than all
sixteen, and the advantage over a well-tuned heuristic weight is not established here. The reported $6.2\times$ speedup is a relative scaling between unoptimized research prototypes, both exceeding the nominal $100$ ms tick budget. An optimized exact-hierarchy baseline could recover the cascade's graph-construction share, approximately one fifth as detailed in Section~\ref{sec:campaign:surrogate}, and potentially more through solver reuse. Nevertheless, this is unlikely to close the gap.

The framework complements Responsibility-Sensitive Safety (RSS)~\cite{shalev2017rss,hasuo2022rss}: RSS pairs a Boolean safety condition with a proper response, whereas a rulebook orders violation objectives by minimum-violation arbitration~\cite{censi2019,tumova2013,veer2023,xiao2021}. The per-tick compliance
vector $b_{\boldsymbol{\epsilon}}(z)$ supplies tier-level Boolean summaries structurally analogous to an RSS condition check, though RSS's proper-response guarantee is the stronger object; the proxy of Section~\ref{sec:campaign:order} is a per-tick conjunction of collision-related observer rules, and a full RSS audit lies outside the present scope.

We identify five concrete directions for follow-on investigation that extend the present contribution. Each addresses a specific scope boundary of the current study and represents a natural next step in the research program rather than a deficiency in the claims made here: (i) stochastic priorities through chance constraints or an expected achievement map, whose upper image need not retain a tractable normal-region representation; (ii) inverse optimal control with candidate $\ell_1$ weights constrained to the equivalence region; (iii) adaptive recalibration triggered by compliance mismatches and active-set changes; (iv) comparisons with optimized hierarchical-QP and PDM-Closed~\cite{dauner2023pdm} baselines; and (v) a safety-stressed scenario subset constructed around severe, adversarial, or ego-initiated events with explicit fault attribution. The present results establish the cascade's $L_9$ separation only under the reactive \texttt{nuPlan-mini} benchmark; adversarial conditions are a natural follow-on.

\subsection{Conclusion}
\label{sec:conclusion}

Three words summarize what we have established about the weights at which one convex solver reproduces a lexicographic cascade. The certificate is \emph{computable}: a weighted sum has the lexicographic solution as a minimizer precisely when $(w,1)$ supports the upper image at $p^\star$, and under $\ell_1$ hinges the admissible slice is a polyhedron obtained by projecting a scaled-KKT system in
$\beta_{i,j}=w_i\alpha_{i,j}$, returning an interior weight and a certified margin whenever that slice has interior within the operator box. It is \emph{foreclosed} under smoothing: with $\ell_2$ squared hinges and a nonzero limiting multiplier, no finite weight is exact, and violation along the proved local-minimizer branch decays only as $O(1/w_i)$. And it is \emph{expiring}: in the evaluated receding-horizon instances the
robust intersection is empty in every class examined, and cross-tick validity tracks active-set stability.

The consequence is architectural rather than parametric. A certificate is issued for a decision instance, not for a controller. Consequently, persistence must be rechecked as the receding-horizon problem evolves. The deployable object is not a tuned weight vector, whether by separation folklore or otherwise, but a monitored single solve with selective cascade fallback. The compliance-pattern proxy triggers that fallback inexpensively and imperfectly; the measured sensitivity is $0.55$. Reliable and inexpensive expiry detection remains the open deployment problem.

The equivalence region reframes the practitioner's question. Rather than asking how much larger $w_1$ must be than $w_2$, the relevant inquiry is whether $w$ lies within the certified set at the current instance, and for what duration. The first question admits no answer; the second admits an algorithm, a margin, and a measured expiry.

\balance
\bibliographystyle{IEEEtran}
\bibliography{references}

\appendices
\section{Proofs of the $\ell_2$ Asymptotic Results}
\label{sec:appendix}
\label{app:l2}

This appendix supplies the local development behind
Theorem~\ref{thm:l2asymp} and Proposition~\ref{prop:l2tol}. Throughout,
$z^\star:=z_{\lex}^\star$; level~$i$ is fixed with $V_i(z^\star)=0$;
$\mathcal B:=\{j:g_{i,j}(z^\star)=0\}$ collects its boundary-binding
hinges; $\eta:=1/w_i$; and
$\Psi(z;w_{-i}):=J(z)+\sum_{i'\ne i}w_{i'}V_{i'}(z)$ collects the
remaining terms of the weighted objective.
The proof is stated for $z^\star\in\operatorname{int}\mathcal Z$. If a
fixed set of smooth hard constraints is active, the same argument applies
in local coordinates on their active manifold under joint LICQ and
second-order sufficiency; no claim is made across active-set changes.

\emph{(R1) Stable sign classes.}
On a neighbourhood of $z^\star$ intersected with the solution path
constructed below, every hinge outside $\mathcal B$ retains its sign class
while each hinge in $\mathcal B$ passes from binding to strictly violated,
so $\Psi$ is twice continuously differentiable there along the branch.

\emph{(R2) Linear independence.}
The gradients
$\{\nabla g_{i,j}(z^\star)\}_{j\in\mathcal B}$ are nonzero and linearly
independent; $G$ stacks them as rows.

\emph{(R3)--(R5) Regular limit problem.}
For every $w_{-i}$ in a compact set
$K\subset\mathbb R_{++}^{L-1}$, $z^\star$ is the locally unique solution of
$(\mathrm P_\infty)\colon\ \min_z\Psi(z;w_{-i})$ s.t.\
$g_{i,j}(z)\le0$, $j\in\mathcal B$,
with unique multipliers $\lambda^\star(w_{-i})>0$, second-order
sufficiency on the tangent space $\ker G$, and strict complementarity.

\begin{lemma}[No exact finite-weight stationary point]
\label{lem:app:noexact}
Under \textup{(R2)--(R5)}, if
$\lambda^\star(w_{-i})\ne0$, then $z^\star$ is not a stationary point of
$F_w(z):=\Psi(z;w_{-i})+w_iV_i(z)$
for any finite $w_i>0$. Since $z^\star\in\operatorname{int}\mathcal Z$, it
cannot be a local or global minimizer of the finite-weight smooth
penalized problem.
\end{lemma}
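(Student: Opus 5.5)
The plan is to argue by contradiction through the first-order condition, comparing the finite-weight gradient at $z^\star$ with the KKT system of the limit problem $(\mathrm P_\infty)$.

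First, compute $\nabla F_w(z^\star)$. By Lemma~\ref{lem:vanish}, every hinge of level $i$ in $\mathcal B$ has zero gradient at $z^\star$. Hinges of level $i$ outside $\mathcal B$ are strictly satisfied, because $V_i(z^\star)=0$, so they vanish on a neighbourhood of $z^\star$. Hence $\nabla(w_iV_i)(z^\star)=0$ for every finite $w_i$, and
\[
\nabla F_w(z^\star)=\nabla\Psi(z^\star;w_{-i}).
\]
The differentiability of $\Psi$ at $z^\star$ has to be justified: (R1) gives smoothness along the branch, and for $z^\star$ itself one uses that the hinges of the other levels keep their sign classes near $z^\star$. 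If a boundary-binding $\ell_1$ hinge of another level is present, this step needs (R1) interpreted as excluding such hinges. I flag this as the delicate point and would state it explicitly as part of (R1).

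Second, invoke the KKT conditions of $(\mathrm P_\infty)$ at $z^\star$. These are guaranteed by (R2) (LICQ) and (R3)--(R5), which give unique multipliers:
\[
\nabla\Psi(z^\star;w_{-i})+G^\top\lambda^\star(w_{-i})=0 .
\]
Therefore
\[
\nabla F_w(z^\star)=-G^\top\lambda^\star(w_{-i}).
\]
By (R2) the rows of $G$ are linearly independent, so $G^\top$ is injective. Since $\lambda^\star\ne0$, it follows that $\nabla F_w(z^\star)\neq0$, independently of $w_i$. This shows $z^\star$ is not stationary for any finite $w_i>0$.

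Third, conclude non-minimality. Since $z^\star\in\operatorname{int}\mathcal Z$ and $F_w$ is differentiable at $z^\star$, any local minimizer of $F_w$ over $\mathcal Z$ at an interior point must satisfy $\nabla F_w=0$. Explicitly, the direction $d=G^\top\lambda^\star$ is a strict descent direction, because $\langle\nabla F_w(z^\star),d\rangle=-\|G^\top\lambda^\star\|^2<0$, and $z^\star+td$ stays in $\mathcal Z$ for small $t>0$. So $z^\star$ is neither a local nor a global minimizer.

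The main obstacle is the first step: confirming that $\Psi$ is differentiable at $z^\star$ and that its gradient is exactly the one appearing in the $(\mathrm P_\infty)$ KKT system. The remaining steps are linear algebra.
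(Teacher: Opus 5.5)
Your proposal is correct and matches the paper's proof: Lemma~\ref{lem:vanish} removes $\nabla V_i(z^\star)$, stationarity of $(\mathrm P_\infty)$ gives $\nabla F_w(z^\star)=-G^\top\lambda^\star(w_{-i})$, and full row rank of $G$ together with $\lambda^\star\ne0$ makes this nonzero; your explicit descent direction $d=G^\top\lambda^\star$ just spells out the first-order necessary condition the paper invokes. The differentiability concern you flag does not arise: in Setting~(Q) every level is a squared hinge and hence $C^1$, so $\Psi$ is differentiable at $z^\star$ once $J$ and the $g_{i,j}$ are (as the appendix tacitly assumes), and there are no $\ell_1$ hinges to exclude, so you need not appeal to (R1), which is not among the lemma's hypotheses anyway.
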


\begin{proof}
By Lemma~\ref{lem:vanish}, $\nabla V_i(z^\star)=0$, so
$\nabla F_w(z^\star)=\nabla\Psi(z^\star;w_{-i})
=-G^\top\lambda^\star(w_{-i})$,
the second equality by stationarity of $(\mathrm P_\infty)$. Because $G$
has full row rank and $\lambda^\star\ne0$, the right-hand side is nonzero,
so $z^\star$ fails the first-order necessary condition of the
finite-weight problem.
\end{proof}

\begin{theorem}[Local solution path and expansion]
\label{thm:app:path}
Under \textup{(R1)--(R5)}, there exist $\bar\eta>0$ and, for each
$w_{-i}\in K$, a continuously differentiable path
$\eta\mapsto\bigl(z(\eta),\lambda(\eta)\bigr)$, $\eta\in[0,\bar\eta)$,
with $z(0)=z^\star$ and
$\lambda(0)=\lambda^\star(w_{-i})$, such that for
$0<\eta<\bar\eta$, $z(\eta)$ is the locally unique stationary point and a
strict local minimizer of $F_w$ near $z^\star$ at $w_i=1/\eta$. Along this
branch,
\begin{equation}\label{eq:app:expansion}
\begin{aligned}
 g_{i,j}(z(\eta))
 &=
 \tfrac12\lambda_j^\star(w_{-i})\,\eta+O(\eta^2),\\
 V_i(z(\eta))
 &=
 \tfrac14\|\lambda^\star(w_{-i})\|_2^2\,\eta^2+O(\eta^3),
\end{aligned}
\end{equation}
with both remainders uniform over $w_{-i}\in K$.
\end{theorem}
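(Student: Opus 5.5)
We reparametrize the penalized stationarity system so that it stays regular as $\eta\to0$, and then apply the implicit function theorem at $\eta=0$. Along a branch on which every hinge in $\mathcal B$ is strictly violated, (R1) gives
\[
\nabla F_w(z)=\nabla\Psi(z;w_{-i})+\tfrac{2}{\eta}\sum_{j\in\mathcal B}g_{i,j}(z)\nabla g_{i,j}(z).
\]
Introduce the auxiliary multipliers $\lambda_j:=2g_{i,j}(z)/\eta$. Stationarity of $F_w$ is then equivalent to the system
\[
H(z,\lambda,\eta;w_{-i}):=
\begin{pmatrix}\nabla\Psi(z;w_{-i})+\sum_{j\in\mathcal B}\lambda_j\nabla g_{i,j}(z)\\ g_{i,j}(z)-\tfrac{\eta}{2}\lambda_j\ \ (j\in\mathcal B)\end{pmatrix}=0 .
\]
This is the standard penalty/augmented-KKT embedding. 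At $\eta=0$ it reduces exactly to the KKT system of $(\mathrm P_\infty)$, which is solved by $(z^\star,\lambda^\star(w_{-i}))$ under (R3)--(R5). Strict complementarity lets us treat the active constraints as equalities there.

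\emph{Step 1 (regularity).} The Jacobian of $H$ in $(z,\lambda)$ at $(z^\star,\lambda^\star,0)$ is the KKT matrix
\[
\begin{pmatrix}\nabla^2_{zz}\mathcal L & G^\top\\ G & 0\end{pmatrix},
\qquad
\mathcal L:=\Psi+\textstyle\sum_j\lambda_j^\star g_{i,j}.
\]
This matrix is nonsingular by the classical argument. Full row rank of $G$ (R2) together with positive definiteness of $\nabla^2_{zz}\mathcal L$ on $\ker G$ (second-order sufficiency) forces any null vector to vanish. $H$ is $C^1$ jointly in $(z,\lambda,\eta,w_{-i})$ because (R1) makes $\Psi$ twice continuously differentiable along the branch. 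The implicit function theorem therefore yields a unique $C^1$ path $(z(\eta),\lambda(\eta))$ on $[0,\bar\eta)$ with $z(0)=z^\star$ and $\lambda(0)=\lambda^\star$. Local uniqueness of the solution of $H=0$ transfers to local uniqueness of the stationary point of $F_w$ near $z^\star$ among points where the hinges in $\mathcal B$ are violated. Points with some $g_{i,j}\le0$ near $z^\star$ must be excluded separately. There the term $[\,\cdot\,]_+$ vanishes, so stationarity would force $\nabla\Psi$ to be nearly $-\sum_{j\ne}\lambda_j\nabla g_{i,j}$ with some $\lambda_j=0$. This contradicts $\lambda^\star>0$ and continuity, and is handled by a small case argument using strict complementarity.

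\emph{Step 2 (uniformity).} Treat $w_{-i}$ as a parameter. The implicit function theorem applies at every $(z^\star,\lambda^\star(w_{-i}),0)$ for $w_{-i}\in K$, and $\lambda^\star(\cdot)$ is continuous because it is affine in $w_{-i}$ when $G$ has full rank. Compactness of $K$, together with a uniform lower bound on the smallest singular value of the KKT matrix over $K$, then gives a single $\bar\eta$ and uniform $C^1$ bounds. I expect this uniform-neighbourhood step, including keeping the sign classes of (R1) stable uniformly, to be the main technical obstacle. My first attempt is a quantitative implicit function theorem (Newton–Kantorovich type) with constants controlled by continuity on $K$.

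\emph{Step 3 (expansions and minimality).} The second block of $H$ gives $g_{i,j}(z(\eta))=\tfrac{\eta}{2}\lambda_j(\eta)=\tfrac{\eta}{2}\lambda_j^\star+O(\eta^2)$, since $\lambda(\eta)=\lambda^\star+O(\eta)$ with uniform Lipschitz constant. Strictly positive $g_{i,j}$ confirms that the branch lies in the violated sign class, consistent with (R1). Squaring and summing gives
\[
V_i=\tfrac{\eta^2}{4}\|\lambda^\star\|_2^2+O(\eta^3).
\]
For strict local minimality, the Hessian of $F_w$ at $z(\eta)$ is
\[
\nabla^2\Psi+\sum_j\lambda_j(\eta)\nabla^2 g_{i,j}+\tfrac{2}{\eta}G(\eta)^\top G(\eta),
\]
where $G(\eta)$ stacks the gradients $\nabla g_{i,j}(z(\eta))$. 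This is a Hessian of $\mathcal L$ plus $\tfrac{2}{\eta}G^\top G$. By the Debreu/Finsler lemma (a matrix positive definite on $\ker G$ plus $c\,G^\top G$ is positive definite for all large $c$), it is positive definite for $\eta$ small, uniformly over $K$ after shrinking $\bar\eta$.
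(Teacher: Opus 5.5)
Your proposal is correct and follows essentially the same route as the paper's proof. Both use the reparametrization $\lambda_j=2g_{i,j}(z)/\eta$ into a square system that reduces at $\eta=0$ to the KKT system of $(\mathrm P_\infty)$, get nonsingularity of the KKT matrix from (R2) and second-order sufficiency, apply the implicit-function theorem to obtain the $C^1$ path, read the expansions off the second block, and prove strict local minimality from positive definiteness of $\nabla^2\mathcal L+\tfrac{2}{\eta}G^\top G$. Your explicit exclusion of nearby stationary points with some $g_{i,j}\le0$ is slightly more careful than the paper, which simply asserts local uniqueness from the implicit-function theorem: stationarity pins $\lambda=-(G(z)G(z)^\top)^{-1}G(z)\nabla\Psi(z)$ as a continuous function of $z$ that stays near $\lambda^\star>0$.
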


\begin{proof}
On the branch specified in~(R1), every $j\in\mathcal B$ is strictly
violated and every other hinge retains its sign, so stationarity of $F_w$
reads
$\nabla\Psi(z;w_{-i})+\sum_{j\in\mathcal B}2w_ig_{i,j}(z)\nabla
g_{i,j}(z)=0$. Setting $\lambda_j:=2w_ig_{i,j}(z)$ and $\eta:=1/w_i$ turns
stationarity and definition into the square system
\begin{equation}\label{eq:app:phi}
 \mathcal H(z,\lambda;\eta,w_{-i})
 :=
 \begin{pmatrix}
  \nabla\Psi(z;w_{-i})+G(z)^\top\lambda\\[2pt]
  \bigl(g_{i,j}(z)-\tfrac{\eta}{2}\lambda_j\bigr)_{j\in\mathcal B}
 \end{pmatrix}
 =0,
\end{equation}
where $G(z)$ stacks the binding-hinge gradients; at $\eta=0$ this is the
active-set KKT system of $(\mathrm P_\infty)$ at
$(z^\star,\lambda^\star)$. Its Jacobian in $(z,\lambda)$,
$\bigl[\begin{smallmatrix}
 \nabla_{zz}^2\mathcal L(z^\star,\lambda^\star) & G^\top\\
 G & 0
\end{smallmatrix}\bigr]$,
is nonsingular because $G$ has full row rank and the Lagrangian Hessian is
positive definite on $\ker G$~\cite{nocedal2006}; the
implicit-function theorem gives the $C^1$ path. The second block
of~\eqref{eq:app:phi} holds identically along the path, so
$g_{i,j}(z(\eta))=\tfrac{\eta}{2}\lambda_j(\eta)
=\tfrac{\eta}{2}\lambda_j^\star+O(\eta^2)$; squaring and summing over
$\mathcal B$ gives the second expansion in~\eqref{eq:app:expansion}
(other level-$i$ hinges stay strictly satisfied), and strict
complementarity keeps the leading coefficients positive, validating the
assumed branch for sufficiently small $\eta>0$. The Hessian
of $F_w$ along the branch,
$\nabla^2\Psi+\sum_{j\in\mathcal B}\lambda_j\nabla^2g_{i,j}
+2w_i\sum_{j\in\mathcal B}\nabla g_{i,j}\nabla g_{i,j}^\top$, has its
first two terms converging to the Lagrangian Hessian of
$(\mathrm P_\infty)$, positive definite on $\ker G$, while the third is
positive and grows on the complementary normal subspace; hence $z(\eta)$
is a strict local minimizer for small $\eta>0$. Local uniqueness is the
implicit-function theorem again, and uniformity over $K$ follows from
compactness and continuous dependence of the KKT system and its inverse
Jacobian on $w_{-i}$.
\end{proof}



\end{document}